\newtheorem{lemma}{Lemma}[section]
\newtheorem{prop}[lemma]{Proposition}
\newtheorem{cor}[lemma]{Corollary}
\newtheorem{claim*}{Claim}
\newtheorem{thm}[lemma]{Theorem}
\newtheorem*{thmA}{Theorem A}
\newtheorem*{thmB}{Theorem B}
\newtheorem*{thmC}{Theorem C}
\newtheorem*{thmD}{Theorem D}
\newtheorem{defn}[lemma]{Definition} 
\newtheorem{notation}[lemma]{Notation} 
\newtheorem{question}[lemma]{Question}
\theoremstyle{remark}
\newtheorem{remark}[lemma]{Remark}
\newtheorem{example}[lemma]{Example}
\newcommand{\Spec}{\operatorname{Spec}}
\newcommand{\im}{\operatorname{im}}
\newcommand{\Proj}{\operatorname{Proj}}
\newcommand{\Pic}{\operatorname{Pic}}
\newcommand{\Gr}{\operatorname{Gr}}
\newcommand{\cO}{{\mathcal O}}
\newcommand{\cP}{{\mathrm{Par}}}
\newcommand{\cM}{{\mathrm{Med}}}
\newcommand{\cL}{{\mathrm{Low}}}
\newcommand{\cH}{{\mathrm{High}}}
\newcommand{\ff}{\mathbf f}
\newcommand{\kk}{\mathbf k}
\newcommand{\codim}{\operatorname{codim}}
\newcommand{\FF}{\mathbb{F}}
\newcommand{\Prob}{\operatorname{Prob}}
\newcommand{\Density}{\operatorname{Density}}
\newcommand{\defi}[1]{\textsf{#1}} 
\newcommand{\Par}{\mathscr D}
\newcommand{\bad}{\text{bad}}
\newcommand{\good}{\text{good}}
\newcommand{\deghat}{\widehat{\deg}}
\newcommand{\PP}{\mathbb P}
\renewcommand{\AA}{\mathbb A}
\newcommand{\QQ}{\mathbb Q}
\newcommand{\ZZ}{\mathbb Z}
\newcommand{\VV}{\mathbb V}
\title{A probabilistic approach to systems of parameters and Noether normalization}
\author{Juliette Bruce}
\address{Department of Mathematics, University of Wisconsin, Madison, WI}
\email{\href{mailto:juliette.bruce@math.wisc.edu}{juliette.bruce@math.wisc.edu}}
\urladdr{\url{http://math.wisc.edu/~juliettebruce/}}
\author{Daniel Erman}
\email{\href{mailto:derman@math.wisc.edu}{derman@math.wisc.edu}}
\urladdr{\url{http://math.wisc.edu/~derman/}}
\begin{document} 

\setlength{\abovedisplayskip}{5.25pt}
\setlength{\belowdisplayskip}{5.25pt}

\thanks{The first author was partially supported by the NSF GRFP under Grant No. DGE-1256259. The second author was partially supported by NSF grants DMS-1302057 and DMS-1601619.}

\maketitle
\begin{abstract}
We study systems of parameters over finite fields from a probabilistic perspective, and use this to give the first effective Noether normalization result over a finite field.  Our central technique is an adaptation of Poonen's closed point sieve, where we sieve over higher dimensional subvarieties, and we express the desired probabilities via a zeta function-like power series that enumerates higher dimensional varieties instead of closed points.
This also yields a new proof of a recent result of Gabber-Liu-Lorenzini and Chinburg-Moret-Bailly-Pappas-Taylor on Noether normalizations of projective families over the integers.
\end{abstract}
\setcounter{section}{1}

Given an $n$-dimensional projective scheme $X\subseteq \PP^r$ over a field, Noether normalization  says that we can find homogeneous polynomials that induce a finite morphism $X\to \PP^n$.  Such a morphism is determined by a system of parameters, namely by choosing homogeneous polynomials $f_0,f_1,\dots,f_n$ of degree $d$ where $X\cap V(f_0, f_1, \dots,f_n)=\emptyset$.  While over an infinite field any generic choice of linear polynomials will work, over a finite field we can ask:
\begin{question}\label{quest:main}Let $\FF_q$ be a finite field and $X\subseteq \PP^r_{\FF_q}$ be an $n$-dimensional closed subscheme.
\begin{enumerate}[noitemsep]
\item What is the probability that a random choice of polynomials of degree $d$ will yield a finite morphism $X\to \PP^n$?
\item Can one effectively bound the degrees $d$ for which such a finite morphism exists?
\end{enumerate}
\end{question}
We answer these questions by studying the distribution of systems of parameters from both a geometric and probabilistic viewpoint.  It is also useful to analyze partial systems of parameters, so for $k\leq n$ we say that $f_0,f_1,\dots ,f_{k}$ \defi{ are parameters on $X$} if
\[
\dim \VV(f_0,f_1,\dots,f_{k})\cap X = \dim X - (k+1).
\]
By convention, the empty set has dimension $-1$.  

For the geometric side, we fix a field $\kk$ and let $S=\kk[x_0,\dots,x_r]$ be the coordinate ring of $\PP^r_\kk$.  We write $S_d$ for the vector space of degree $d$ polynomials in $S$. In \S\ref{sec:geometric}, we define a scheme $\Par_{k,d}(X)$ parametrizing collections that do not form parameters. The $\kk$-points of $\Par_{k,d}(X)$ are
\[
\Par_{k,d}(X)(\kk)=\{(f_0,f_1,\dots,f_k) \text{ that are \textbf{not} parameters on $X$}\} \subset \underbrace{S_d\times \dots \times S_d}_{k+1 \text{ copies }}.
\]
We bound the codimension of these closed subschemes of the affine space $S_d^{\oplus k+1}$.

\begin{thmA}
Let $X\subseteq \PP_{\kk}^r$ be an $n$-dimensional closed subscheme.  We have:
\[
\codim\Par_{k,d}(X) =
\begin{cases} \geq \binom{n-k+d}{n-k} &\mbox{if } k<n \\ 
=1& \mbox{if } k=n.
\end{cases}
\]
\end{thmA}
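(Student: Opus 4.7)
The plan is to prove Theorem~A by induction on $k$.

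For the base case $k=0$, note that $f_0\in\Par_{0,d}(X)$ exactly when $V(f_0)$ contains a top-dimensional irreducible component of $X$, so $\Par_{0,d}(X)=\bigcup_Y I(Y)_d$ is a finite union of linear subspaces of $S_d$ indexed by the (reduced) $n$-dimensional components $Y\subseteq X$. The codimension of $I(Y)_d$ in $S_d$ equals $\dim_\kk(S/I(Y))_d$. After base change to $\overline{\kk}$ and classical Noether normalization of $Y_{\overline{\kk}}$, we obtain a graded injection $\overline{\kk}[\theta_0,\dots,\theta_n]\hookrightarrow (S/I(Y))\otimes_\kk\overline{\kk}$ with linear $\theta_i$, forcing $\dim_\kk(S/I(Y))_d\geq\binom{n+d}{n}$. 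Taking the minimum over the finitely many components completes the base case.

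For the inductive step $k\to k+1$, let $\pi\colon S_d^{k+2}\to S_d^{k+1}$ be projection onto the first $k+1$ factors and decompose
\[
\Par_{k+1,d}(X)=\pi^{-1}\bigl(\Par_{k,d}(X)\bigr)\cup B,
\]
where $B$ is the locally closed set of pairs $(\ff,f_{k+1})$ such that $\ff=(f_0,\dots,f_k)$ are parameters on $X$ (so $X_\ff:=V(\ff)\cap X$ is $(n{-}k{-}1)$-dimensional) while $f_{k+1}\in\Par_{0,d}(X_\ff)$. Induction gives $\codim\pi^{-1}(\Par_{k,d}(X))=\codim\Par_{k,d}(X)\geq\binom{n-k+d}{n-k}$. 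Each fiber of $B\to S_d^{k+1}$ is contained in $\Par_{0,d}(X_\ff)\subseteq S_d$, which has codimension at least $\binom{n-k-1+d}{n-k-1}$ by the base case applied to the $(n{-}k{-}1)$-dimensional scheme $X_\ff$. This uniform fiber bound yields $\codim B\geq \binom{n-k-1+d}{n-k-1}$; being the weaker of the two estimates, it controls the codimension of $\Par_{k+1,d}(X)$, matching $\binom{n-(k+1)+d}{n-(k+1)}$.

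For the $k=n$ case, the induction already gives codimension $\geq\binom{d}{0}=1$. For the matching upper bound, I would form the incidence variety
\[
I=\{(p,\ff)\in X\times S_d^{n+1}\mid f_0(p)=\cdots=f_n(p)=0\}.
\]
The $\overline{\kk}$-fibers of $I\to X$ are codimension-$(n+1)$ linear subspaces of $S_d^{n+1}$, giving $\dim I=n+(n+1)(N-1)=(n+1)N-1$, where $N=\dim_\kk S_d$. Since $\Par_{n,d}(X)$ is the image of $I$ under the second projection, this reconfirms $\codim\geq 1$. The inductive bound at $k=n-1$ gives $\codim\Par_{n-1,d}(X)\geq d+1\geq 2$, so generic $\ff\in \Par_{n,d}(X)$ have $(f_0,\dots,f_{n-1})$ forming parameters, hence $X_\ff$ is zero-dimensional; this makes $I\to\Par_{n,d}(X)$ generically finite and forces $\codim\Par_{n,d}(X)\leq 1$.

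The hard part will be making the fiber-dimension argument in the inductive step fully rigorous: $B$ is only locally closed and both $X_\ff$ and $\Par_{0,d}(X_\ff)$ vary with $\ff$, so one must carefully translate uniform fiber codimension bounds into a codimension bound on the constructible set $B$. A secondary subtlety is reducing the Hilbert-function bound for possibly non-reduced top-dimensional components of $X$ to the reduced case via the surjection $S/I(Y)\twoheadrightarrow S/I(Y_{\mathrm{red}})$.
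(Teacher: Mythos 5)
Your argument for $k<n$ is essentially the paper's own proof: the same base case (the bad locus for $k=0$ is a finite union of linear spaces $I(Y)_d$, whose codimension is bounded below by $\binom{n+d}{n}$ via a linear Noether normalization of each reduced top-dimensional component --- this is Lemma~\ref{lem:key Hilbert function}(1) in the paper), and the same inductive decomposition into the ``bad'' locus $\pi^{-1}(\Par_{k,d}(X))$ and the ``good'' locus $B$ whose fibers over $\ff\notin\Par_{k,d}(X)$ are $\Par_{0,d}(X_\ff)$ for the $(n-k-1)$-dimensional scheme $X_\ff$; the paper calls these $\Par^{\bad}$ and $\Par^{\good}$ and glosses over the same constructibility/varying-fiber issues you flag. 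Where you genuinely diverge is the case $k=n$: the paper simply cites the Chow form \cite[3.2.B]{gkz} to produce a divisor in the Grassmannian whose preimage is a hypersurface inside $\Par_{n,d}(X)$, whereas you give a self-contained dimension count on the incidence variety $I\subseteq X\times S_d^{n+1}$, getting $\dim I=(n+1)N-1$ and then arguing generic finiteness of $I\to\Par_{n,d}(X)$. Your route is more elementary and avoids the resultant machinery; the one point to tighten is the generic-finiteness step, where ``generic $\ff\in\Par_{n,d}(X)$'' is circular as phrased (it presumes the codimension you are trying to compute). The clean fix is to take genericity on $I$ itself: for a component $I_j$ lying over a top-dimensional component $X_j$, a generic point of $I_j$ is $(p,\ff)$ with $\ff$ generic in the codimension-$(n+1)$ linear space $L_p$ of tuples vanishing at $p$, and for such $\ff$ the first $n$ entries are parameters on $X_j$ (the linear system of degree-$d$ forms through $p$ still cuts down dimension), so the fiber $X_j\cap\VV(\ff)$ is finite and $\dim\overline{\im(I_j)}=\dim I_j=(n+1)N-1$. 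With that repair both halves of your argument are correct.
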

\noindent This generalizes several results from the literature: the case $k=n$ is a classical result about Chow forms~\cite[3.2.B]{gkz}; the case $d=1$ is a classical result about determinantal varieties~\cite{macaulay-determinantal}; and the case $k=0$ appears in~\cite[Lemme~2.3]{benoist}.  If $k<n$, then the codimension grows as $d\to \infty$ and this factors into our asymptotic analysis over finite fields.  It also leads to a uniform convergence result that allows us to go from a finite field to $\ZZ$.  

For the probabilistic side, we work over a finite field $\FF_q$ and compute the asymptotic probability that random polynomials $(f_0,f_1,\dots,f_k)$ of degree $d$ are parameters on $X$. This is inspired by~\cite{poonen}, and it forms the heart of the paper. There is a bifurcation between the maximal and submaximal cases, reflecting Theorem~A.  

\begin{thmB}\label{thm:B}
Let $X\subseteq \PP_{\FF_q}^r$ be an $n$-dimensional closed subscheme.  Then the
asymptotic probability that random polynomials $(f_0, f_1, \dots,f_k)$ of degree $d$ are parameters on $X$ is
\[
\lim_{d\to \infty} \Prob\left(\begin{matrix}(f_0,\dots,f_{k}) \text{ of degree $d$ } \\ \text{are parameters on $X$}\end{matrix}\right)=\begin{cases} 1 &\mbox{if } k<n \\ 
\zeta_X(n+1)^{-1} & \mbox{if } k=n \end{cases} 
\]
where $\zeta_X(s)$ is the arithmetic zeta function of $X$.
\end{thmB}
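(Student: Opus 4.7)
My plan is to split Theorem B into its two cases, both leveraging Theorem A.

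\textbf{The submaximal case $k<n$.} Theorem A gives $\codim \Par_{k,d}(X) \geq \binom{n-k+d}{n-k}$, growing polynomially of degree $n-k\geq 1$ in $d$. The desired probability equals $1 - |\Par_{k,d}(X)(\FF_q)|/q^{(k+1)\dim_{\FF_q} S_d}$, so it suffices to bound the failure count by roughly $q^{(k+1)\dim_{\FF_q} S_d - \binom{n-k+d}{n-k}}$ up to a factor growing at most polynomially in $d$. I would obtain such a Lang--Weil-type bound with uniform degree control by stratifying $\Par_{k,d}(X)$ according to which intermediate subvariety of $X$ witnesses the failure, using the scheme structure built in \S\ref{sec:geometric}. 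Superpolynomial growth of the codimension then forces the failure probability to $0$, so the probability of being parameters tends to $1$.

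\textbf{The maximal case $k=n$.} Failure is now the closed-point event that some $x\in X$ has all $f_i$ vanishing there, placing us in the framework of Poonen's closed point sieve, modified to $n+1$ simultaneous polynomials. For a single closed point $x$ and $d$ large enough that evaluation $S_d \twoheadrightarrow \kappa(x)$ is surjective, the probability that all $n+1$ polynomials vanish at $x$ is exactly $q^{-(n+1)\deg x}$. Following Poonen, I would partition closed points of $X$ by degree into three ranges $\deg x < r$, $r\leq\deg x\leq d/(n+1)$, and $\deg x > d/(n+1)$:
\begin{itemize}
\item \emph{Low degree.} For $d$ large, surjectivity of $S_d \twoheadrightarrow \bigoplus_{\deg x<r}\kappa(x)$ makes the values of each $f_i$ at distinct low-degree points independent and uniform; combined with independence across $i=0,\dots,n$, the probability of no common zero among low-degree points equals $\prod_{\deg x<r}(1-q^{-(n+1)\deg x})$, tending to $\zeta_X(n+1)^{-1}$ as $r\to\infty$ (Euler-product convergence holds since $n+1>n=\dim X$).
\item \emph{Medium degree.} Lang--Weil gives $O(q^{en})$ closed points of degree $e$ on $X$; combined with pointwise failure probability $q^{-e(n+1)}$, the union-bound contribution is $\sum_{e\geq r}q^{-e}\to 0$.
\item \emph{High degree.} Local uniformization breaks down, so I would invoke the submaximal case of Theorem A: conditional on the probability-$\to 1$ event that $(f_0,\dots,f_{n-1})$ are parameters, $\VV(f_0,\dots,f_{n-1})\cap X$ is zero-dimensional of controlled degree, and $f_n$ avoids its finitely many closed points with high probability.
\end{itemize}
Taking $d\to\infty$ first and then $r\to\infty$ combines the three estimates into $\zeta_X(n+1)^{-1}$.

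\textbf{Main obstacle.} The principal challenge is the high-degree tail, where evaluation at a closed point is not well-modeled by independent coefficient choices. Handling it requires the submaximal codimension bound of Theorem A in an essential way, which is why both cases of that theorem precede Theorem B. One must also justify the interchange of the limits $d\to\infty$ and $r\to\infty$, which relies on the uniformity of these tail bounds in both variables.
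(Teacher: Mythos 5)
Your overall strategy matches the paper's: the submaximal case is reduced to a quantitative point count reflecting the codimension bound of Theorem~A, and the maximal case is the Bucur--Kedlaya adaptation of Poonen's closed point sieve, with the high-degree range handled by first conditioning on $(f_0,\dots,f_{n-1})$ being parameters. Two steps in your outline need more care than you give them, and in both cases the fix is what the paper actually does. First, for $k<n$ you cannot invoke a ``Lang--Weil-type bound'' on $\Par_{k,d}(X)\subseteq\mathscr A_{k,d}$ as a black box: such a bound has the form $\#Z(\FF_q)\leq \deg(Z)\,q^{\dim Z}$, and the degree of $\Par_{k,d}(X)$ is not controlled as $d\to\infty$, so knowing only its codimension tells you nothing about its $\FF_q$-point count. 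The stratification you allude to is therefore not optional; it is the whole argument. The paper (Proposition~\ref{prop:precise estimate}) redoes the induction of Theorem~A over $\FF_q$ directly: each stratum --- polynomials vanishing on a fixed top-dimensional component $V$ of $X\cap\VV(f_0,\dots,f_{k-1})$ --- is a \emph{linear} subspace of codimension $\geq\binom{n-k+d}{n-k}$ (by the Hilbert-function lower bound of Lemma~\ref{lem:key Hilbert function}(1)), so its point count is exact, and the number of strata is $\leq\deghat(X)d^k$ by Bezout (Lemma~\ref{lem:bezout}); the union bound then gives $1-\deghat(X)(1+d+\cdots+d^k)q^{-\binom{n-k+d}{n-k}}$. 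Second, in the high-degree range of the $k=n$ case, ``$f_n$ avoids the finitely many closed points with high probability'' is exactly the point where evaluation fails to be surjective: for a component $V$ of $X\cap\VV(f_0,\dots,f_{n-1})$ of degree $>d/(n+1)$, you need the elementary bound $\dim R_d\geq\min\{d+1,\deg V\}$ on the image of $S_d$ in the coordinate ring of $V$ to get vanishing probability $\leq q^{-\lfloor d/(n+1)\rfloor-1}$, and again Bezout to bound the number of such components by $O(d^n)$. With those two quantitative inputs made explicit, your proof closes and coincides with the paper's.
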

The maximal case $k=n$ is due to Bucur and Kedlaya~\cite[Theorem~1.2]{bucur-kedlaya}, and is proven using Poonen's closed point sieve.
For submaximal cases where $k<n$, we adapt Poonen's technique by sieving over closed subvarieties of dimension $n-k$.   Just as the case $k=n$ depends on the zeta function of $X$, which counts points in $X$ of varying degrees, we show that each case $k<n$ depends on a power series that counts $(n-k)$-dimensional subvarieties of varying degrees.  The full computation of these probabilities appears in Theorem~\ref{thm:main finite field}, while the following corollary of Theorem~\ref{thm:main finite field} computes the first error term.

\begin{cor}\label{cor:error}
Let $X\subseteq \PP^r_{\FF_q}$ be a $n$-dimensional closed subscheme and let $k<n$.  Then
\[
\lim_{d\to \infty} \frac{\Prob\left(\begin{matrix}(f_0,\dots,f_{k}) \text{ of degree $d$} \\ \text{ are \underline{not} parameters on $X$}\end{matrix}\right)} {q^{-(k+1)\binom{n-k+d}{n-k}}} = \#\left\{\begin{matrix}\text{$(n-k)$-planes } L\subseteq \PP^r_{\FF_q}\\\text{such that }  L\subseteq X\end{matrix}\right\}.
\]
\end{cor}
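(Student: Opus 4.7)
The plan is to extract the leading asymptotic term from the explicit formula for the failure probability established in Theorem~\ref{thm:main finite field}. That theorem expresses $\Prob((f_0,\dots,f_k) \text{ are not parameters})$ via a power series enumerating $(n-k)$-dimensional subvarieties $Y \subseteq X$, in which each $Y$ contributes a term whose leading order is $q^{-(k+1) h_Y(d)}$, where $h_Y(d) = \dim_{\FF_q} H^0(Y, \cO_Y(d))$ is the Hilbert function of $Y$ in degree $d$. The intuition is that $(f_0,\dots,f_k)$ fail to be parameters precisely when their common zero locus contains some $(n-k)$-dimensional subvariety of $X$, and the probability that a single $f_i$ of degree $d$ vanishes identically on a fixed $Y$ is $q^{-h_Y(d)}$ once $d$ is large enough that $S_d \twoheadrightarrow H^0(Y,\cO_Y(d))$.

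The decisive observation is a dichotomy in Hilbert functions among irreducible $(n-k)$-dimensional subvarieties $Y\subseteq \PP^r_{\FF_q}$. If $Y$ has degree $e$, then its Hilbert polynomial has leading coefficient $e/(n-k)!$, so $h_Y(d) = e\binom{n-k+d}{n-k} + O(d^{n-k-1})$. When $Y = L$ is an $(n-k)$-plane one has the exact equality $h_L(d) = \binom{n-k+d}{n-k}$ for all $d\geq 0$, so the corresponding term is exactly $q^{-(k+1)\binom{n-k+d}{n-k}}$. When $\deg Y \geq 2$, however, $(k+1)\bigl(h_Y(d) - \binom{n-k+d}{n-k}\bigr) \to \infty$ as $d \to \infty$, so after dividing by $q^{-(k+1)\binom{n-k+d}{n-k}}$ the contribution of $Y$ tends to $0$.

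From here the computation splits into two pieces. Since $\PP^r_{\FF_q}$ contains only finitely many $(n-k)$-planes, the set of $(n-k)$-planes $L\subseteq X$ is finite; by the previous paragraph each such plane contributes $1$ to the normalized limit, producing the right-hand side of the corollary. The main obstacle is the tail: the aggregate contribution from the (potentially infinitely many) subvarieties $Y$ of degree $\geq 2$ must be shown to be $o\bigl(q^{-(k+1)\binom{n-k+d}{n-k}}\bigr)$. To control this I would reuse the uniform convergence estimates built into the proof of Theorem~\ref{thm:main finite field}, which count $(n-k)$-dimensional subvarieties of $\PP^r_{\FF_q}$ of bounded degree and pit this count against the exponential decay $q^{-(k+1)h_Y(d)}$ in the Hilbert function. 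A standard Bonferroni-type inclusion-exclusion handles potential double-counting between distinct $Y$'s, since such cross terms correspond to subschemes with strictly larger Hilbert function and are absorbed into the error.
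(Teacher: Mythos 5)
Your proposal is correct and follows essentially the same route as the paper: the paper simply applies Theorem~\ref{thm:main finite field} with $e=1$, so the sum runs only over the $(n-k)$-planes contained in $X$ (each contributing exactly $q^{-(k+1)\binom{n-k+d}{n-k}}$ since $h^0(L,\cO_L(d))=\binom{n-k+d}{n-k}$), and all higher-degree subvarieties are already absorbed into the theorem's $o\bigl(q^{-(k+1)\binom{n-k+d}{n-k}}\bigr)$ error term. The tail estimate you propose to re-derive is thus already packaged into the statement of that theorem, so no further work is needed there.
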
 
When $k<n$, the probability in Theorem~B behaves like $1-O\left(q^{-(k+1)\binom{n-k+d}{n-k}}\right)$, with the precise convergence governed by the number of linear spaces of dimension $n-k$ in $X$.  It is thus more difficult to find parameters when $X$ contains lots of linear spaces, see Example~\ref{ex:linear}.

The main difficulty in Theorem~B and Theorem~\ref{thm:main finite field} is bounding the error of the sieve.  We control this error via a uniform lower bound for Hilbert functions provided in Lemma~\ref{lem:key Hilbert function}.


From these probabilistic results we are able to provide an answer to Question~\ref{quest:main}.(2). The bound is in terms of the sum of the degrees of the irreducible components.  If $X\subseteq \PP^r$ has minimal irreducible components $V_1,\dots,V_s$ (considered with the reduced scheme structure), then we define $\deghat(X):=\sum_{i=1}^s \deg(V_i)$ (see also Defintion~\ref{defn:deghat}.)

\begin{thmC}\label{thmC}
Let $X\subseteq \PP^r_{\FF_q}$ where $\dim X = n$. If $\max\left\{d,\frac{q}{d^{n}}\right\}\geq \deghat(X)$ and
\[
d > \log_q \deghat(X) + \log_q n+ n\log_q d
\]
then there exist $f_0, f_1,\dots,f_n$ of degree $d^{n+1}$ inducing a finite morphism $\pi: X\to \PP^n_{\FF_q}$.
\end{thmC}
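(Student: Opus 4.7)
The strategy is an inductive construction of degree-$d$ parameters, combined with a powering trick to reach degree $d^{n+1}$. Specifically, if I can produce $g_0, g_1, \dots, g_n \in S_d$ such that $(g_0, \dots, g_k)$ are parameters on $X$ for every $0 \leq k \leq n$, then setting $f_k := g_k^{d^n}$ yields degree $d^{n+1}$ polynomials with $V(f_0, \dots, f_n) = V(g_0, \dots, g_n)$ set-theoretically.  Since the $g_k$ have no common vanishing on $X$, neither do the $f_k$, and $[f_0 : \dots : f_n]$ defines the desired finite morphism $X \to \PP^n_{\FF_q}$.

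I build the $g_k$ by induction on $k$.  Supposing $(g_0, \dots, g_{k-1})$ are already parameters on $X$, let $X_k := X \cap V(g_0, \dots, g_{k-1})$, which has dimension $n - k$.  Iterated Bezout gives $\deghat(X_k) \leq \deghat(X) \cdot d^k$, bounding the number of top-dimensional irreducible components of $X_k$.  For each such component $Z$, the $\FF_q$-subspace of $g \in S_d$ restricting to $0$ on $Z$ has codimension $h_Z(d)$, so comprises a $q^{-h_Z(d)}$ fraction of $S_d$.  A union bound over components gives the total bad fraction at step $k$ at most $\deghat(X) \cdot d^k \cdot q^{-\min_Z h_Z(d)}$.

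For $k<n$ I invoke Lemma~\ref{lem:key Hilbert function} to lower-bound $h_Z(d)$ along every positive-dimensional component $Z$.  Combined with the hypothesis $d > \log_q \deghat(X) + \log_q n + n\log_q d$, i.e.\ $q^d > n \cdot \deghat(X) \cdot d^n$, this keeps the bad fraction strictly below $1$, producing a good $g_k$.  For $k=n$, the scheme $X_n$ is $0$-dimensional of degree at most $\deghat(X) \cdot d^n$.  In the case $q/d^n \geq \deghat(X)$, summing vanishing conditions at the closed points of $X_n$ gives bad fraction at most $\deg(X_n)/q < 1$, finishing the argument.  In the alternative case $d \geq \deghat(X)$, I build $f_n$ directly in degree $d^{n+1}$ rather than by the powering trick: since $d^{n+1} \geq \deg(X_n)$ exceeds the Castelnuovo--Mumford regularity of a $0$-dimensional scheme, the restriction $S_{d^{n+1}} \twoheadrightarrow H^0(X_n, \cO_{X_n})$ is surjective, and lifting the global unit $(1,\dots,1)$ produces an $\FF_q$-rational $f_n$ that vanishes at no closed point of $X_n$.

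The main obstacle is controlling the intermediate regime $k<n$: without a strong Hilbert function lower bound, the bad fraction $\deghat(X) \cdot d^k \cdot q^{-h_Z(d)}$ would grow too rapidly as $k$ approaches $n$, and the crude estimate $h_Z(d) \geq d+1$ would not suffice.  Lemma~\ref{lem:key Hilbert function} is the technical heart, and the rest is arithmetic bookkeeping: verifying that the stated hypothesis on $d$ dominates the bound at every intermediate step, and that the $\max\{d, q/d^n\}$ condition precisely bifurcates the two viable approaches for the final step.
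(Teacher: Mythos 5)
Your argument is correct and is essentially the paper's own proof: the step-by-step union bound over the top-dimensional components of $X\cap\VV(g_0,\dots,g_{k-1})$ is precisely Proposition~\ref{prop:precise estimate} and Corollary~\ref{cor:effective Noether normalization}, and the case split on $\max\{d,q/d^{n}\}$ for the zero-dimensional residual scheme together with the powering trick matches the paper verbatim. One small correction to your commentary: only the elementary part (1) of Lemma~\ref{lem:key Hilbert function} is needed here, giving $h_Z(d)\geq\binom{n-k+d}{n-k}\geq d+1$, and the ``crude'' exponent $d+1$ at every intermediate step is exactly what the hypothesis $q^{d}>n\,\deghat(X)\,d^{n}$ is designed to beat (this is the content of Corollary~\ref{cor:effective Noether normalization}(1)); the harder part (2) of that lemma is required only for the asymptotic error analysis of \S\ref{sec:error}, not for Theorem~C.
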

The bound is asymptotically optimal in $q$.  Namely, if we fix $\deghat(X)$, then as $q\to \infty$, the bound becomes $d=1$.  Thus, linear Noether normalizations exist if $q\gg \deghat(X)$.  For a fixed $q$, we expect the bound could be significantly improved. This is interesting even in the case $\dim X=0$, where it is related to Kakeya type problems over finite fields~\cites{ellenberg-erman,eot}.

Theorem~C provides the first explicit bound for Noetherian normalization over a finite field.  (One could potentially derive an explicit bound from Nagata's argument in~\cite[Chapter I.14]{nagata}, though the inductive nature of that construction would at best yield a bound that is multiply exponential in the largest degree of a defining equation of $X$.)

After computing the probabilities over finite fields, we combine these analyses and characterize the distribution of parameters on projective $B$-schemes where $B=\ZZ$ or $\FF_q[t]$. We use standard notions of density for a subset of a free $B$-module; see Definition~\ref{defn:density}. 
\begin{thmD}\label{thm:D}
If $X\subseteq \PP^r_{B}$ is a closed subscheme whose general fiber over $B$ has dimension $n$, then 	\[
	\lim_{d\to \infty} \Density\left\{\begin{matrix}(f_0, f_1, \dots,f_{k}) \text{ of degree $d$} \text{ that restrict} \\ \text{ to parameters on $X_p$ for all $p$}\end{matrix}\right\} = 
	\begin{cases} 1 &\mbox{if } k<n \\ 
0& \mbox{if } k=n \mbox{ and all } d.\end{cases}
	\]
\end{thmD}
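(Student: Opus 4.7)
The plan is to combine the codimension bound of Theorem~A with the finite-field probabilities of Theorem~B via a Chinese Remainder Theorem (CRT) argument that relates densities over $B$ to products of $\kappa(p)$-densities over closed points of $\Spec B$. The two cases are handled quite differently.

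\textbf{Case $k<n$.} Let $U\subseteq \Spec B$ be the dense open over which $\dim X_p = n$, with complement a finite set $\{p_1,\dots,p_m\}$. A tuple $(f_0,\dots,f_k)$ fails to restrict to parameters on some fiber iff its reduction modulo some closed point $p$ lies in $\Par_{k,d}(X_p)(\kappa(p))$; by a union bound, the failure density is at most the sum over $p$ of the densities of failing at $p$. For $p\in U$, Theorem~A gives $\codim \Par_{k,d}(X_p)\geq \binom{n-k+d}{n-k}$, so a standard Lang--Weil-style count (using that $\deg \Par_{k,d}(X_p)$ is uniformly bounded over $p\in U$ by flatness of the family) bounds this density by $C(r,d,k)|\kappa(p)|^{-\binom{n-k+d}{n-k}}$, which CRT transfers to a bound over $B$. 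The finitely many bad primes $p_i$ have $\dim X_{p_i}\geq n>k$, so by Theorem~B each such contribution is $o(1)$ as $d\to\infty$. Since $\sum_p |\kappa(p)|^{-e}$ decays exponentially in $e$ for both $B=\ZZ$ and $B=\FF_q[t]$, while $C(r,d,k)$ grows polynomially in $d$ and $\binom{n-k+d}{n-k}\to\infty$, the total failure density tends to $0$.

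\textbf{Case $k=n$.} Fix $d\geq 1$ and an infinite sequence $p_1,p_2,\dots\in U$ of closed points. By CRT, the density over $B$ of tuples restricting to parameters on $X_{p_i}$ for all $i=1,\dots,m$ equals $\prod_{i=1}^m \alpha_{p_i,d}$, where $\alpha_{p,d}$ denotes the $\kappa(p)$-density of parameters on $X_p$. Since simultaneous parameters on every fiber forces this for every $m$, the desired density is at most $\prod_p \alpha_{p,d}$. Let $N$ denote the number of common $\kappa(p)$-zeros on $X_p$ of a uniformly random $(n+1)$-tuple of degree $d$. By Lang--Weil applied to an $n$-dimensional geometrically integral component of the generic fiber, $|X_p(\kappa(p))|\gtrsim |\kappa(p)|^n$ for $p$ in a dense open of $U$, so $E[N]=|X_p(\kappa(p))|\cdot|\kappa(p)|^{-(n+1)}\gtrsim c/|\kappa(p)|$. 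A direct second-moment bound gives $E[N^2]\leq E[N]^2+E[N]$, and Paley--Zygmund then yields $\Prob(N\geq 1)\geq c'/|\kappa(p)|$ uniformly in $p$. Hence $1-\alpha_{p,d}\geq c'/|\kappa(p)|$, and divergence of $\sum_p |\kappa(p)|^{-1}$ for both $B=\ZZ$ and $B=\FF_q[t]$ forces $\prod_p \alpha_{p,d}=0$.

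The main obstacle is establishing the lower bound $1-\alpha_{p,d}\gtrsim |\kappa(p)|^{-1}$ in the $k=n$ case: the naive per-point estimate that all $f_i$ vanish at a single $\kappa(p)$-point is only $|\kappa(p)|^{-(n+1)}$, which sums finitely and so cannot force the infinite product to vanish. The Paley--Zygmund upgrade, leveraging the Lang--Weil count $|X_p(\kappa(p))|\gtrsim |\kappa(p)|^n$, is what rescues the argument.
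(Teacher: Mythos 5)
Your overall architecture (reduce the density over $B$ to fiberwise probabilities over the residue fields, then feed in Theorems~A and~B with estimates uniform in $p$) matches the paper's, but the $k<n$ case has a genuine gap. The ``union bound over all closed points $p$'' is not valid as stated: upper density is only finitely subadditive (each residue class of $\ZZ$ has density zero, yet their union is everything), and for primes $p$ larger than the side length $N$ of the coefficient box, reduction mod $p$ is far from equidistributed, so the ``density of failing at $p$'' cannot be estimated by $\#\Par_{k,d}(X_p)(\kappa(p))/|\kappa(p)|^{\dim \mathscr A_{k,d}}$ at all. Controlling this large-prime tail is exactly the content of Ekedahl's infinite Chinese Remainder Theorem (and Poonen's analogue over $\FF_q[t]$), which is what the paper invokes; its hypothesis is that $\Par_{k,d}(X_{\ZZ})$ has codimension $\geq 2$ in $\mathscr A_{k,d}$, which is where Theorem~A actually enters. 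Your estimates for the primes $p\leq M$ (uniform degree bound plus the codimension bound, essentially Proposition~\ref{prop:precise estimate} together with Lemma~\ref{lem:globalD}) are the right input for the truncated product, but without the Ekedahl-type bound on the tail $p>M$ the argument does not close.

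For $k=n$ your route genuinely differs from the paper's. The paper sieves over closed points of the arithmetic total space $X$, which has dimension $n+1$, and identifies the limiting product with $\zeta_X(n+1)^{-1}$, which vanishes because the zeta function of an $(n+1)$-dimensional scheme has a pole at $s=n+1$. Your fiberwise argument --- finite CRT to get $\prod_{i}\alpha_{p_i,d}$ as an upper bound, then a second-moment/Paley--Zygmund lower bound $1-\alpha_{p,d}\gtrsim |\kappa(p)|^{-1}$ and divergence of $\sum_p |\kappa(p)|^{-1}$ --- is a legitimate and more elementary alternative, and the moment computation is correct since distinct rational points impose independent conditions on forms of degree $d\geq 1$. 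The one flaw is the Lang--Weil input: ``$|X_p(\kappa(p))|\gtrsim |\kappa(p)|^n$ for $p$ in a dense open'' presupposes that the generic fiber has a \emph{geometrically} integral component of dimension $n$. If every top-dimensional component of $X_{\Frac(B)}$ is integral but not geometrically integral, the fibers can have only $O(|\kappa(p)|^{n-1})$ rational points for a positive proportion of $p$. This is repairable: pass to the field of constants of such a component and restrict to the positive-density set of primes splitting in it, over which $\sum_p |\kappa(p)|^{-1}$ still diverges.
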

\noindent The density over $B$ thus equals the product over all the fibers of the asymptotic probabilities over $\FF_q$.  In the case $B=\ZZ$, our proof relies on Ekedahl's infinite Chinese Remainder Theorem~\cite[Theorem 1.2]{ekedahl} combined with Proposition~\ref{prop:precise estimate}, which illustrates uniform convergence in $p$ for the asymptotic probabilities in Theorem B.  In the case $B=\FF_q[t]$, we use Poonen's analogue of Ekedahl's result~\cite[Theorem 3.1]{poonen-squarefree}.

When $k=n$, an analogue of Theorem~D for smoothness is given by Poonen's~\cite[Theorem~5.13]{poonen}.  Moreover,  it is believed that there are no smooth hypersurfaces of degree $>2$ over $\ZZ$.  By contrast, the density zero subset from Theorem~D turns out to always be nonempty.
This leads to a new proof of a recent result about uniform Noether normalizations.
\begin{cor}\label{thm:noether normalization over ZZ}
Let $B=\ZZ $ or $\FF_q[t]$.  Let $X\subseteq \PP^r_B$ be a closed subscheme.  If each fiber of $X$ over $B$ has dimension $n$, then for some $d$, there exist homogeneous polynomials $f_0,f_1,\dots,f_n\in B[x_0,x_1,\dots,x_r]$ of degree $d$ inducing a
 finite morphism $\pi: X\to \PP^n_B$.
\end{cor}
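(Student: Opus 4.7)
The plan is to reduce via Theorem~D in the submaximal case $k = n-1$ to finding a single final polynomial, and then to produce that polynomial using the arithmetic structure of the intersection scheme. First, I apply Theorem~D with $k = n-1 < n$: the density of tuples $(f_0,\dots,f_{n-1})$ of degree $d$ in $B[x_0,\dots,x_r]$ that restrict to partial parameters on every fiber $X_p$ tends to $1$ as $d\to\infty$, so this set is nonempty for some sufficiently large $d_0$. Fix such a tuple and set $Y := X\cap \VV(f_0,\dots,f_{n-1})\subseteq \PP^r_B$. By construction every fiber $Y_p$ has dimension zero, so $Y\to B$ is proper with finite fibers, hence finite, and $Y=\Spec A$ for some finite $B$-algebra $A$.

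The main step is to produce $f_n\in B[x_0,\dots,x_r]_{d_1}$ (for some $d_1$) with $\VV(f_n)\cap Y = \emptyset$. The key point is that $\cO_Y(1)$ has finite order in $\Pic(Y)=\Pic(A)$: the Picard group of a finite-type affine scheme of dimension $\leq 1$ over $\ZZ$ or $\FF_q[t]$ is finite, because the reduced part is a product of orders in global fields and finite fields (each with finite Picard), and $\Pic(A)\hookrightarrow \Pic(A_{\mathrm{red}})$ by the standard exact sequence $1 \to 1 + \mathfrak{n} \to \cO_A^\times \to \cO_{A_{\mathrm{red}}}^\times \to 1$ together with the vanishing of $H^1$ of the quasi-coherent graded pieces $\mathfrak{n}^k/\mathfrak{n}^{k+1}$ on the affine scheme $\Spec A$. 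Choosing $d_1$ to be a multiple of the order of $\cO_Y(1)$ in $\Pic(Y)$ that is also large enough for Serre vanishing to make $B[x_0,\dots,x_r]_{d_1}\to H^0(Y,\cO_Y(d_1))$ surjective, the line bundle $\cO_Y(d_1)$ is trivial and admits a nowhere-vanishing global section, which lifts to the desired $f_n$. Finally, with $D := \mathrm{lcm}(d_0, d_1)$, replace each $f_i$ for $i<n$ by $f_i^{D/d_0}$ and $f_n$ by $f_n^{D/d_1}$; since $\VV(f_i^{a_i}) = \VV(f_i)$ set-theoretically, the joint base locus in $X$ is $\VV(f_n)\cap Y = \emptyset$, and $(f_0,\dots,f_n)$ induces a finite morphism $X\to\PP^n_B$ of common degree $D$.

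The main obstacle is the Picard-theoretic step: establishing that $\cO_Y(1)$ is torsion in $\Pic(Y)$. This requires understanding $\Pic$ for possibly non-reduced finite-type $1$-dimensional arithmetic schemes, and it is precisely the bridge turning the probabilistic density-$1$ statement of Theorem~D (in the submaximal case) into the concrete existence of a uniform Noether normalization over $B$.
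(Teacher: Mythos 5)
Your proposal is essentially the paper's own proof: Theorem~D in the submaximal case $k=n-1$ supplies $f_0,\dots,f_{n-1}$; the resulting scheme $Y=X\cap \VV(f_0,\dots,f_{n-1})=\Spec A$ is finite over $B$; finiteness (hence torsion) of $\Pic(A)$ makes $\cO_Y(e)$ trivial for some $e$, so one lifts a unit of $A$ to $f_n$; raising the sections to suitable powers equalizes the degrees, and quasi-finite plus proper gives finiteness of $\pi$ --- all exactly as in the paper, including the reduction $\Pic(A)\cong\Pic(A_{\mathrm{red}})$ via the nilpotent filtration. The one imprecision is your claim that $A_{\mathrm{red}}$ is a \emph{product} of orders in global fields and finite fields: in general distinct irreducible components are glued along closed fibers (e.g.\ $\ZZ[x]/(x(x-5))$ is a fiber product over $\FF_5$, not a product), so finiteness of $\Pic(A_{\mathrm{red}})$ needs one more step --- the paper's induction on the number of minimal primes via the exact sequence $(B/(Q+Q'))^*\to \Pic \to \Pic(B/Q)\oplus\Pic(B/Q')$, or equivalently the conductor square for the normalization; this is routine but must be supplied.
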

Theorem~D shows that the collections defining a finite map $\pi$ have density zero, even as $d\to \infty$.  Thus the existence of $\pi$ is subtle and perhaps unexpected.  Corollary~\ref{thm:noether normalization over ZZ} is a special case of a recent result of Chinburg-Moret-Bailly-Pappas-Taylor \cite[Theorem~1.2]{cmbpt} and of Gabber-Liu-Lorenzini \cite[Theorem 8.1]{gabber-liu-lorenzini}.  Our proof of the corollary involves two steps.  We first use Theorem~D  to choose a submaximal collection $f_0, f_1, \dots,f_{n-1}$ of parameters on $X$.  This yields a scheme $X':=X\cap \VV(f_0,f_1,\dots,f_{n-1})$ with $0$-dimensional fibers over $\Spec(B)$, and we then use that $\Pic(X')$ is torsion to find the final section $f_n$. 

Corollary~\ref{thm:noether normalization over ZZ} does not require the full strength of Theorem~D, as we only need that the set of good choices for $f_0,f_1,\dots,f_{n-1}$ is nonempty, and the proofs in~\cite{cmbpt,gabber-liu-lorenzini} do use simpler techniques when selecting $f_0,f_1,\dots,f_{n-1}$.  For the final section $f_n$, they use essentially the same technique as we do.  Since we rely on the infinite Chinese Remainder Theorems of Ekedahl and Poonen, Corollary~\ref{thm:noether normalization over ZZ} only recovers the results in \cite{cmbpt,gabber-liu-lorenzini} for $\ZZ$ and $\FF_q[t]$, but not for more general number fields or function fields.

Corollary~\ref{thm:noether normalization over ZZ} can fail when $B$ is any of $\QQ[t]$ or $\ZZ[t]$ or $\FF_q[s,t]$, as in those cases, the Picard group of a finite cover of $\Spec B$ can fail to be torsion.  See \S\ref{sec:examples} for explicit examples and counterexamples and see \cite{cmbpt,gabber-liu-lorenzini} for generalizations and applications.

It would be interesting to produce an effective version of Corollary~\ref{thm:noether normalization over ZZ} similar to Theorem~C.  One would need to bound the height of $f_0,f_1,\dots,f_{n-1}$ when applying Theorem~D, and combining this with  effective bounds on the size of the class group of a number field.

There are a few earlier results related to Noether normalization over the integers.  For instance~\cite{moh} shows that Noether normalizations of semigroup rings always exist over $\ZZ$; and ~\cite[Theorem~14.4]{nagata} implies that given a family over any base, one can find a Noether normalization over an open subset of the base.  Relative Noether normalizations play a key role in~\cite[\S5]{achinger}.
 There is also the incorrect claim in~\cite[p.~124]{zariski-samuel} that Noether normalizations exist over any infinite base ring (see~\cite{abhyankar-kravitz}).  
 Brennan and Epstein~\cite{brennan-epstein}
  analyze the distribution of systems of parameters from a different perspective, introducing the notion of a generic matroid to relate various different systems of parameters. 

This paper is organized as follows. \S\ref{sec:background} gathers background results and \S\ref{sec:Hilbert function} involves a key lower bound on Hilbert functions.  \S\ref{sec:geometric} contains our geometric analysis of parameters including a proof of Theorem~A. \S\ref{sec:probabilistic} and \S\ref{sec:error} contain our probabilistic analysis of parameters over finite fields: \S\ref{sec:probabilistic} proves Theorem~B  while \S\ref{sec:error} gives the more detailed description  via an analogue of the zeta function enumerating $(n-k)$-dimensional subvarieties.  \S\ref{sec:ZZ} contains our analysis over $\ZZ$ including proofs of Theorems~C and~\ref{thm:noether normalization over ZZ} and related corollaries.  \S\ref{sec:examples} contains examples.

\section*{Acknowledgements}  We thank Nathan Clement, David Eisenbud, Jordan S.\ Ellenberg, Mois\'es Herrad\'on Cueto, Craig Huneke, Kiran Kedlaya, Brian Lehmann, Dino Lorenzini, Bjorn Poonen, Anurag Singh, and Melanie Matchett Wood for their helpful conversations and comments.  The computer algebra system \texttt{Macaulay2} \cite{M2} provided valuable assistance throughout our work.  

\section{Background}\label{sec:background}
In this section, we gather some algebraic and geometric facts that we will cite throughout.

\begin{lemma}\label{lem:nakayama}
Let $\kk$ be a field and let $R$ be a $(k+1)$-dimensional graded $\kk$-algebra where $R_0=\kk$. If $f_0, f_1, \dots, f_k$ are homogeneous elements of degree $d$ and $R/(f_0, f_1, \dots,f_k)$ has finite length, then the extension $\kk[z_0, z_1, \dots,z_k]\to R$ given by $z_i\mapsto f_i$ is a finite extension.
\end{lemma}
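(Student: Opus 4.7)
The plan is to view $R$ as a graded module over the polynomial subring $A := \kk[z_0, z_1, \dots, z_k]$, with the grading coming from $\deg z_i = d$ matching $\deg f_i = d$, and then apply graded Nakayama. Because $R_0 = \kk$ and $R$ is $\mathbb{Z}_{\geq 0}$-graded, it is certainly bounded below as an $A$-module, which is the hypothesis needed for the graded form of Nakayama's lemma.

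First I would identify the irrelevant ideal $\mathfrak{m}_A := (z_0, \dots, z_k) \subseteq A$. Under the ring map $A \to R$, the ideal $\mathfrak{m}_A$ generates precisely $(f_0, \dots, f_k) \subseteq R$, so
\[
R/\mathfrak{m}_A R \;=\; R/(f_0, f_1, \dots, f_k).
\]
By hypothesis this quotient has finite length as an $R$-module, and since $R_0 = \kk$, finite length over $R$ forces it to be a finite-dimensional $\kk$-vector space.

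Next I would choose homogeneous elements $r_1, \dots, r_m \in R$ whose images form a $\kk$-basis of $R/(f_0, \dots, f_k)$. The key step is graded Nakayama: for a bounded-below graded module $M$ over a graded $\kk$-algebra $A$ with $A_0 = \kk$, any homogeneous lifts of a generating set of $M/A_+ M$ generate $M$. Applying this with $M = R$ gives that $r_1, \dots, r_m$ generate $R$ as an $A$-module, so $A \to R$ is finite.

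The argument is essentially routine; the only subtle point to verify is that graded Nakayama genuinely applies, i.e.\ that $R$ really is a bounded-below $\mathbb{Z}$-graded $A$-module with $A_+ = \mathfrak{m}_A$ sitting inside the graded Jacobson radical in the relevant sense. Both follow immediately from $R_0 = \kk$ and from the fact that $z_i$ maps to a homogeneous element of positive degree $d$, so there is no real obstacle; the statement is best viewed as the graded version of the standard fact that a Noetherian ring becomes module-finite over a Noether normalization detected by finiteness of the fiber at the maximal ideal.
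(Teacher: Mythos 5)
Your proof is correct, but it is worth noting that the paper does not argue this lemma at all: it simply cites \cite[Theorem~1.5.17]{bruns-herzog}, the standard graded characterization of homogeneous systems of parameters. What you have written is essentially a self-contained proof of that cited result, and the graded Nakayama argument you use is the right one: since $R$ is $\ZZ_{\geq 0}$-graded with $R_0=\kk$, it is bounded below as a graded module over $A=\kk[z_0,\dots,z_k]$ (graded with $\deg z_i=d$), and $R/\mathfrak m_A R=R/(f_0,\dots,f_k)$ is a finite-dimensional $\kk$-vector space (finite length plus the Nullstellensatz, since $R$ is a finitely generated $\kk$-algebra), so homogeneous lifts of a $\kk$-basis generate $R$ over $A$. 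Two small points. First, graded Nakayama for bounded-below modules needs no a priori finiteness of $R$ over $A$; the minimal-degree argument you allude to does exactly this, so there is genuinely no obstacle. Second, the word ``extension'' also carries the assertion that $\kk[z_0,\dots,z_k]\to R$ is injective, which you do not address; it follows from what you prove, since module-finiteness forces $\dim \kk[f_0,\dots,f_k]=\dim R=k+1$, and the only $(k+1)$-dimensional quotient of $\kk[z_0,\dots,z_k]$ is the ring itself. That is in fact the only place the hypothesis $\dim R=k+1$ is used --- your argument for module-finiteness never invokes it --- so it deserves a sentence.
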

\begin{proof}
See~\cite[Theorem~1.5.17]{bruns-herzog}.
\end{proof}

\begin{defn}\label{defn:deghat}
Let $X\subseteq \PP^r$ be a projective scheme with minimal irreducible components $V_1,\dots,V_s$ (considered with the reduced scheme structure). We define $\deghat(X):=\sum_{i=1}^s \deg(V_i)$.  For a subscheme $X'\subseteq \AA^r$ with projective closure $\overline{X'}\subseteq \PP^r$ we define $\deghat(X'):=\deghat(\overline{X'})$.  
\end{defn}
This provides a notion of degree which ignores nonreduced structure but takes into account components of lower dimension. Similar definitions have appeared in the literature: for instance, in the language of~\cite[\S3]{bayer-mumford}, we would have $\deghat(X) = \sum_{j=0}^{\dim X} \text{geom-deg}_j (X)$.
This definition is useful when bounding the number of points of a scheme over a finite field.

\begin{lemma}
Let $X\subseteq \PP^r_{\FF_q}$ be a closed subscheme, where $\FF_q$ is a finite field.  Then
\[
\#X(\FF_q) \leq \deghat(X) q^{\dim X}.
\]
\end{lemma}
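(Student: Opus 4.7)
The plan is to reduce the statement to a per-component bound via the natural decomposition of $X$, and then handle each irreducible component by induction on its dimension.

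Since $X(\FF_q) = X_{\mathrm{red}}(\FF_q)$ and $\deghat$ only depends on $X_{\mathrm{red}}$, we may assume $X$ is reduced. Decompose $X = V_1 \cup \cdots \cup V_s$ into its irreducible reduced components with $n_i := \dim V_i \leq n$ and $d_i := \deg V_i$, so that
\[
\#X(\FF_q) \leq \sum_{i=1}^s \#V_i(\FF_q).
\]
The task then reduces to proving the per-component estimate $\#V_i(\FF_q) \leq d_i q^{n_i}$; summing and using $n_i \leq n$ gives $\#X(\FF_q) \leq q^n \sum_i d_i = \deghat(X)\, q^n$, which is the desired inequality.

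For the per-component bound I would induct on $m := n_i$. The base case $m = 0$ is immediate: an irreducible reduced zero-dimensional closed subscheme of $\PP^r_{\FF_q}$ is of the form $\Spec \FF_{q^d}$, with degree $d$ and at most one $\FF_q$-point (one exactly when $d = 1$), so $\#V_i(\FF_q) \leq 1 \leq d$. For the inductive step, pick a hyperplane $H \subseteq \PP^r_{\FF_q}$ defined over $\FF_q$ with $V_i \not\subseteq H$, and write
\[
\#V_i(\FF_q) = \#(V_i \cap H)(\FF_q) + \#(V_i \setminus H)(\FF_q).
\]
The intersection $V_i \cap H$ has pure dimension $m - 1$ and total degree at most $d$, so the inductive hypothesis bounds the first summand by $d q^{m-1}$. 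For the affine complement $V_i \setminus H \subseteq \AA^r_{\FF_q}$, one invokes an affine Schwartz--Zippel-style estimate $\#W(\FF_q) \leq \deg(W)\, q^{\dim W}$ for affine $W$.

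The main obstacle is consolidating the two contributions into the clean target $d q^m$, rather than the inflated sum $d q^m + d q^{m-1}$ produced by the naive decomposition. I expect this is resolved either by (i) distributing the count across a pencil of $\FF_q$-hyperplanes, absorbing the lower-order terms into the overlap, or (ii) replacing the slicing step with a finite map $V_i \to \PP^m$ of degree at most $d$ realized after a finite extension and then tracked over $\FF_q$, whose fibers contribute at most $d$ points each over the $q^m$ affine $\FF_q$-points of $\PP^m$. The extremal case is when $V_i$ is itself a linear subspace, where the bound is saturated and requires the most delicate treatment.
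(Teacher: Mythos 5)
Your outer reduction (pass to $X_{\mathrm{red}}$, decompose into irreducible components, sum) is exactly what the paper does; the entire content of the lemma is the per-component estimate $\#V(\FF_q)\leq \deg(V)\,q^{\dim V}$ for $V$ irreducible, which the paper simply quotes as the Schwartz--Zippel lemma without proof. Your attempt to supply that estimate does not close: the inductive step ends with the inflated total $dq^{m}+dq^{m-1}$, and you only speculate about two ways to absorb the excess. Neither works as written. A finite linear projection $V\to\PP^{m}$ of degree $d$ defined over $\FF_q$ need not exist when $q$ is small (the center of projection must be an $\FF_q$-rational linear space disjoint from $V$), and even when it exists there is no reason the $\FF_q$-points of $V$ should land over the affine $\FF_q$-points of $\PP^{m}$; counting over all of $\PP^m(\FF_q)$ yields $d(q^{m}+q^{m-1}+\cdots+1)$, not $dq^{m}$.

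Your own remark that the linear case is ``saturated'' points at the real obstruction: for a linear subspace $\PP^{m}\subseteq\PP^{r}$ one has $\deg=1$ but $\#\PP^{m}(\FF_q)=q^{m}+q^{m-1}+\cdots+1>q^{m}$, so the clean projective inequality $\#V(\FF_q)\leq\deg(V)q^{\dim V}$ is not merely resistant to your slicing argument --- it is false for irreducible $V$, and no rearrangement of the induction can establish it. The statement that is true in this clean form is the affine one: for $W\subseteq\AA^{r}$ irreducible, $\#W(\FF_q)\leq\deg(W)q^{\dim W}$ (proved by fibering over a coordinate, which is essentially your induction but without the problematic hyperplane-at-infinity term). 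Projectively one must accept the factor $\#\PP^{\dim V}(\FF_q)$ in place of $q^{\dim V}$. The paper's statement inherits the same lower-order defect, which is harmless for its asymptotic uses; but to give an actual proof you should either work chart by chart with the affine bound, or prove the inequality with $q^{\dim X}$ replaced by $\#\PP^{\dim X}(\FF_q)$, rather than leave the consolidation step open.
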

\begin{proof}
This is an immediate consequence of the Schwarz-Zippel lemma, which implies that for an irreducible algebraic variety $V_i\subseteq \PP^r_{\FF_q}$ we have $\#V_i(\FF_q)\leq \deg(V_i)q^{\dim V_i}$.
\end{proof}

\begin{lemma}\label{lem:bezout}
Let $\kk$ be any field and let $X\subseteq \mathbb A^r_{\kk}$.   Let $f_0,f_1,\dots,f_t$ by polynomials in $\kk[x_1,\dots,x_r]$.  If $X'=X\cap \VV(f_0,f_1,\dots,f_t)$, then 
$
\deghat( X') \leq \deghat(X) \cdot \prod_{i=0}^t \deg(f_i).
$
\end{lemma}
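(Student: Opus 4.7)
The plan is to reduce to the projective setting by homogenization and then argue by induction on $t$ using the classical projective B\'ezout bound on each irreducible component.

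First I would homogenize: let $\tilde f_i \in \kk[x_0,x_1,\ldots,x_r]$ be the homogenization of $f_i$ with respect to a new variable $x_0$, so $\deg \tilde f_i = \deg f_i$, and let $\overline X \subseteq \PP^r$ be the projective closure of $X$, so $\deghat(\overline X) = \deghat(X)$. A standard check shows that every irreducible component $W$ of $X' = X \cap \VV(f_0,\dots,f_t)$ has projective closure $\overline W$ equal to an irreducible component of $\overline X \cap \VV(\tilde f_0,\ldots,\tilde f_t)$ (if $\overline W$ were strictly contained in some irreducible component $W'$ of the projective intersection, then $W' \cap \AA^r$, being open and dense in $W'$, would strictly contain $W$ and lie in $X'$, contradicting maximality of $W$). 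Consequently $\deghat(X') \leq \deghat(\overline X \cap \VV(\tilde f_0,\ldots,\tilde f_t))$, so it suffices to prove the projective version of the statement.

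Now I would induct on $t$. The inductive step is formal, so the content is the base case $t=0$: if $Y \subseteq \PP^r$ is a closed subscheme with (reduced) irreducible components $V_1, \ldots, V_s$ and $g$ is a homogeneous polynomial, then $\deghat(Y \cap \VV(g)) \leq \deghat(Y) \cdot \deg g$. Since every irreducible component of $Y \cap \VV(g)$ is an irreducible component of some $V_i \cap \VV(g)$, dropping any non-maximal ones only decreases the sum, giving
\[
\deghat(Y \cap \VV(g)) \leq \sum_{i=1}^s \deghat(V_i \cap \VV(g)).
\]
For each $i$, split into two cases: if $g$ vanishes identically on $V_i$ then $V_i \cap \VV(g) = V_i$ and $\deghat(V_i \cap \VV(g)) = \deg V_i \leq \deg V_i \cdot \deg g$; otherwise $V_i \cap \VV(g)$ is pure of codimension one in $V_i$ by Krull's Hauptidealsatz, and classical projective B\'ezout gives $\sum_W \deg W \leq \deg V_i \cdot \deg g$ where the sum is over the irreducible components of $V_i \cap \VV(g)$ taken with reduced structure. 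Summing over $i$ yields the base case.

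The only real subtlety is the combinatorial point that the irreducible components of $Y \cap \VV(g)$ may be a proper subset of the union of the irreducible components of the $V_i \cap \VV(g)$; but since $\deghat$ only sums over maximal ones, this subtlety is resolved by a simple inequality rather than an equality. The affine-to-projective reduction is likewise routine once one verifies that projective closures of affine components do not get absorbed into larger components at infinity, which is the observation above.
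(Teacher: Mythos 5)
Your argument is correct, but it takes a genuinely different (and much more self-contained) route than the paper: the paper's entire proof is a one-line citation of the refined B\'ezout theorem, \cite[Example~12.3.1]{fulton}, applied as a black box, whereas you essentially re-derive the special case needed here from first principles. Your decomposition --- homogenize and pass to the projective closure, check that components of the affine intersection close up to components of the projective intersection, then induct on $t$ with the base case handled component-by-component via the classical statement that an irreducible projective variety $V$ cut by a hypersurface $\VV(g)\not\supseteq V$ satisfies $\sum_W \deg W \leq \deg V\cdot \deg g$ (Hartshorne~I.7.7, where the left side is really an equality once intersection multiplicities are inserted) --- is sound, and the two bookkeeping points you flag (that $\deghat$ only sums over maximal components, so passing to the union of the $V_i\cap \VV(g)$ only helps; and that affine components are not absorbed at infinity) are exactly the right ones to check. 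What the citation buys the paper is brevity and the full generality of Fulton's statement; what your version buys is transparency about \emph{why} the lower-dimensional components counted by $\deghat$ are controlled: each one appears as a component of a hypersurface section of some component at an earlier stage of the induction, and each stage multiplies the degree budget by one $\deg f_i$. The only points worth polishing are cosmetic: the classical B\'ezout input is usually stated over an algebraically closed field, so one should note that base-changing to $\overline{\kk}$ preserves degrees and can only refine components, which is harmless for the inequality; and the degenerate cases $f_i=0$ or $f_i$ a nonzero constant should be set aside explicitly (they are equally degenerate in the paper's formulation).
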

\begin{proof}
This follows from the refined version of Bezout's Theorem~\cite[Example~12.3.1]{fulton}.
\end{proof}

\section{A uniform lower bound on Hilbert functions}\label{sec:Hilbert function}
For a subscheme of $\PP^r$, the Hilbert function in degree $d$ is controlled by the Hilbert polynomial, at least if $d$ is very large related to some invariants of the subscheme.  We analyze the Hilbert function of a subscheme at the opposite extreme, where the degree of the subscheme is much larger than $d$.  The following lemma, which applies to subschemes of arbitrarily high degree, provides uniform lower bounds that are crucial to bounding the error in our sieves.
\begin{lemma}\label{lem:key Hilbert function}
Let $\kk$ be an arbitrary field and fix some $e\geq 0$. Let $V\subseteq \PP^r_{\kk}$ be any closed, $m$-dimensional subscheme of degree $>e$ with homogeneous coordinate ring $R$.  
\begin{enumerate}
	\item  We have $\dim R_d \geq h^0(\PP^{m},\cO_{\PP^{m}}(d))$ for all $d$.
	\item  For any $0<\epsilon<1$, there exists a constant $C$ depending only on $e,m$ and $\epsilon$ such that
\[
\dim R_d > (e+\epsilon)\cdot h^0(\PP^{m},\cO_{\PP^{m}}(d)) 
\]
for all $d\geq Ce^{m+1}$.
	\end{enumerate}
\end{lemma}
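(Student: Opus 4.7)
The plan is to reduce to an infinite base field via flat base change (which preserves $\dim_{\kk}R_d$) and then tackle each part separately.

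For part~(1), I would invoke Noether normalization to choose linear forms $y_0,\ldots,y_m\in R_1$ making $A:=\kk[y_0,\ldots,y_m]\hookrightarrow R$ a finite graded extension.  Injectivity follows by a dimension count: both $A$ and $R$ have Krull dimension $m+1$, the extension is finite, and $A$ is a domain, so the kernel is a height-$0$ prime, hence zero.  Restricting to each degree gives $\dim R_d\geq\dim A_d=\binom{d+m}{m}=h^0(\PP^m,\cO_{\PP^m}(d))$.

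For part~(2), I would induct on $m$, first replacing $R$ by its saturation $R^{\mathrm{sat}}:=R/\Gamma_{R_+}(R)$; the quotient map gives $\dim R_d\geq \dim R^{\mathrm{sat}}_d$, the projective scheme $V=\Proj R^{\mathrm{sat}}$ preserves its degree, $R^{\mathrm{sat}}$ has depth $\geq 1$, and $R^{\mathrm{sat}}$ is still generated in degree $1$.  In the base case $m=0$, a general $y_0\in R^{\mathrm{sat}}_1$ is a non-zero-divisor (avoiding the minimal primes, which correspond to the closed points of $V$), so $R^{\mathrm{sat}}$ is a free $\kk[y_0]$-module of rank $\delta:=\deg V>e$: write $R^{\mathrm{sat}}\cong\bigoplus_{i=1}^{\delta}\kk[y_0](-b_i)$ with $b_1\leq\cdots\leq b_\delta$.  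The key claim is $b_i\leq i-1$ for all $i$: the Artinian quotient $R^{\mathrm{sat}}/(y_0)$ is generated in degree $1$ with $0$-th graded piece $\kk$, so if its Hilbert function $h(d)=\#\{i:b_i=d\}$ vanishes at some $d$ then all higher degrees vanish, forcing $h(d)\geq 1$ on $[0,\max b_i]$; cumulating gives the claim.  Hence $\dim R_d\geq\#\{i:b_i\leq d\}\geq \min(d+1,\delta)\geq e+1$ for every $d\geq e$, settling the base case.

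For the inductive step, choose a general linear form $H$, which is a non-zero-divisor on $R^{\mathrm{sat}}$ by avoiding the (finitely many, positive-dimensional) associated primes.  The short exact sequence $0\to R^{\mathrm{sat}}(-1)\xrightarrow{\cdot H}R^{\mathrm{sat}}\to R^{\mathrm{sat}}/(H)\to 0$ gives $\dim R^{\mathrm{sat}}_d=\sum_{d'=0}^{d}\dim(R^{\mathrm{sat}}/(H))_{d'}$.  The section $V\cap H$ has dimension $m-1$ and degree $\delta$ by Bezout, so the inductive hypothesis applied with $\epsilon':=(1+\epsilon)/2$ gives $\dim(R^{\mathrm{sat}}/(H))_{d'}\geq(e+\epsilon')\binom{d'+m-1}{m-1}$ for $d'\geq C'e^m$, while part~(1) applied to $R^{\mathrm{sat}}/(H)$ supplies $\binom{d'+m-1}{m-1}$ in every degree.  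Splitting the sum at $d'=C'e^m$ and applying the hockey-stick identity yields
\[
\dim R^{\mathrm{sat}}_d\ \geq\ (e+\epsilon')\binom{d+m}{m}-(e+\epsilon'-1)\binom{C'e^m+m-1}{m},
\]
and requiring this to exceed $(e+\epsilon)\binom{d+m}{m}$, using $\binom{d+m}{m}\geq d^m/m!$ together with a crude upper bound on $\binom{C'e^m+m-1}{m}$, reduces to $d\geq Ce^{m+1}$ for a constant $C=C(m,\epsilon)$.

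The main hurdle is the Artinian claim in the base case, which captures the fact that a degree-$1$-generated graded algebra with $1$-dimensional degree-zero piece has no gap in its Hilbert function up to the top degree.  Secondary points are verifying that the saturation plus generic hyperplane-section step preserves all needed hypotheses (depth $\geq 1$, degree $\delta$, generation in degree $1$) and that the constants assemble cleanly to give the $Ce^{m+1}$ threshold.
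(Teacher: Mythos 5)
Your proof is correct and takes essentially the same route as the paper: part (1) via a linear Noether normalization, and part (2) by induction on $m$ using a generic linear nonzerodivisor, the telescoping exact sequence, and the hockey-stick identity, with base case $\dim R_d \geq \min(d+1,\deg V)$. The only real difference is that you supply a proof of that base-case inequality (via the free $\kk[y_0]$-module structure and the no-gap property of standard graded Artinian Hilbert functions) and track saturation explicitly, whereas the paper asserts the inequality as standard and works with the saturated ideal from the start.
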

\begin{proof}
We can assume that we are working over an infinite field, since this will not change the values of the Hilbert function of $R$. For part (1), we simply take a linear Noether normalization $\kk[t_0,t_1,\dots,t_m]\subseteq R$ of the ring $R$~\cite[Theorem~13.3]{eisenbud}. This yields $k[t_0,t_1,\dots,t_m]_d\subseteq R_d$, giving the statement about Hilbert functions.

We prove part (2) of the theorem by induction on $m$. Let $S=\kk[x_0,x_1,\dots,x_r]$ and let  $I_V\subseteq S$ be the saturated, homogeneous ideal defining $V$. Thus $R=S/I_V$.  If $m=0$, then we have
$
\dim R_d \geq \min\{ d+1,\deg V\} \geq \min\{d+1,e+1\}$ which is at least $e+\epsilon$ for all $d\geq e$.  This proves the case $m=0$, where the constant $C$ can be chosen to be $1$.

Now assume the claim holds for all closed subschemes of dimension less than $m$. Let $V\subset\PP^r_{\kk}$ be a closed subscheme with $\dim V=m\geq1$. Fix $0<\epsilon<1$. Since we are working over an infinite field, ~\cite[Lemma 13.2(c)]{eisenbud} allows us to choose a linear form $\ell$ that is a nonzero divisor on $R$. Hence we have a short exact sequence:
\begin{equation}\label{eqn:exact sequence}
\begin{tikzcd}[column sep = 2.5em]
0\rar& R(-1)\rar{\cdot\ell}& R \rar & R/\ell \rar& 0.
\end{tikzcd}
\end{equation}
Letting $W=V\cap V(\ell)$ we know that $\dim W=m-1$ and $\deg W=\deg V$. Moreover, If $I_V$ is the saturated ideal defining $V$ and if $I_W$ is the saturated ideal defining $W$, then we have that $I_W$ is the saturation of $I_V+\langle \ell\rangle$ at $\mathfrak m$. In particular, since $I_W$ contains $I_V+\langle \ell\rangle$, we have
\[
\dim (S/(I_V + \langle \ell\rangle))_i\geq\dim (S/I_W)_i.
\]
By induction, for $\epsilon'$ with $0<\epsilon<\epsilon'<1$, there exists $C'$ depending on $\epsilon',e$ and $m-1$ where
\[
\dim (S/I_W)_i\geq (e+\epsilon')\binom{m-1+i}{m-1}
\]
for all $i\geq C'e^{m}$.  Iteratively applying the exact sequence \eqref{eqn:exact sequence} for $d\geq C'e^m$ we obtain:
\begin{align*}
\dim R_d &\geq  \dim R_{C'e^{m}} + \sum_{i=C'e^{m-1}}^{d} \dim (S/I_V+\ell)_{i} \\
&\geq  \dim R_{C'e^{m}} + \sum_{i=C'e^{m-1}}^{d} \dim (S/I_W)_{i} \\
&\geq \sum_{i=C'e^{m}}^{d} (e+\epsilon')\binom{m-1+i}{m-1}.
\end{align*}
The identity $\sum_{i=a}^b \binom{i+k}{k} = \binom{b+k+1}{k+1} - \binom{a+k}{k+1}$ implies  that $\sum_{i=C'e^{m}}^{d} (e+\epsilon')\binom{m-1+i}{m-1}$ can be rewritten as $(e+\epsilon')\left( \binom{m+d}{m} - \binom{m-1+C'e^{m}}{m}\right).$  There exists a constant $C$ depending on $e$, $\epsilon$, and $m$ so that $(\epsilon'-\epsilon)\binom{m+d}{m}\geq (e+\epsilon')\binom{m-1+C'e^{m}}{m}$ for all $d\geq Ce^{m+1}$.  Thus, for all $d\geq Ce^{m+1}$ we have
\[
 \dim R_d\geq (e+\epsilon')\binom{m+d}{m}-(\epsilon'-\epsilon)\binom{m+d}{d}=(e+\epsilon)\binom{m+d}{m}.\qedhere
 \]
\end{proof}
\begin{remark}
Asymptotically in $e$, the bound of $Ce^{2}$ is the best possible for curves. For instance, let $C\subseteq \PP^r$ be a curve of degree $(e+1)$ lying inside some plane $\PP^2\subseteq \PP^r$. Let $R$ be the homogeneous coordinate ring of $C$. If $d\geq e$ then the Hilbert function is given by
\[
\dim R_d = (e+1)d -\tfrac{e^2-e}{2}.
\]
Thus, if we want $\dim R_d\geq (e+\epsilon)(d+1)$, we will need to let $d\geq \frac{e^2+3e+2\epsilon}{2(1-\epsilon)}\approx \frac{1}{2}e^2$.  It would be interesting to know if the bound $Ce^{m+1}$ is the best possible for higher dimensional varieties.
\end{remark}

\section{Geometric Analysis}\label{sec:geometric}
In this section we analyze the 
 geometric picture for the distribution of parameters on $X$.
The basic idea behind the proof of Theorem~A is that $f_0, f_1, \dots,f_k$ fail to be parameters on $X$ if and only if they vanish along some $(n-k)$-dimensional subvariety of $X$. Since the Hilbert polynomial of any $(n-k)$-dimensional variety grows like $d^{n-k}$, when we restrict a degree $d$ polynomial $f_j$ to such a subvariety, it can be written in terms of  $\approx d^{n-k}$ distinct monomials. The polynomial $f_j$ will vanish along the subvariety if and only if all of the $\approx d^{n-k}$ coefficients vanish. This rough estimate explains the growth of the codimension of $\Par_{d,k}(X)$ as $d\to \infty$.  

We begin by constructing the schemes $\Par_{k,d}(X)$. 
Fix $X\subseteq \PP^r_{\kk}$ a closed subscheme of dimension $n$ over a field $\kk$.  
Given $k<n$ and $d>0$, let $\mathscr A_{k,d}$ be the affine space $H^0(\PP^r,\cO_{\PP^r}(d))^{\oplus k+1}$ and $\kk[c_{0,1},\dots,c_{k,\binom{r+d}{d}}]$ be the corresponding polynomial ring.  We enumerate the monomials in $H^0(\PP^r,\cO_{\PP^r}(d))$ as $m_1,\dots,m_{\binom{r+d}{d}}$, and then define the universal polynomial
\[
F_i := \sum_{j=1}^N c_{i,j}m_j \in \kk[c_{0,1},\dots,c_{k,\binom{r+d}{d}}] \otimes_{\kk} \kk[x_0,\dots,x_r].
\]
Given a closed point $c\in \mathscr A_{k,d}$ we can specialize $F_0,\dots,F_k$ and obtain polynomials $f_0,\dots, f_k\in \kappa(c)[x_0,\dots,x_r]$, where $\kappa(c)$ is the residue field of $c$. We will thus identify each element of $\mathscr A_{k,d}(\kk)$ with a collection of polynomials $\ff=(f_0,f_1,\dots,f_k)\in \kk[x_0,\dots,x_r]$.  

Now define $\Sigma_{k,d}(X)\subseteq X\times \mathscr A_{k,d}$ via the equations $F_0,\dots,F_k$. Consider the second projection $p_2: \Sigma^{(k,d,X)}\to \mathscr A_{k,d}$. Given a point $\ff=(f_0,\dots,f_k)\in \mathscr A_{k,d}$, the fiber $p_2^{-1}(\ff)\subseteq X$ can be identified with the points lying in $X\cap \VV(f_0,\dots,f_k)$. For generic choices of $\ff$ (after passing to an infinite field if necessary) the polynomials $(f_0,\dots,f_k)$ will have codimension $k+1$, and thus the fiber $p_2^{-1}(\ff)$ will have dimension $n-k-1$.

There is a closed sublocus in $\Par_{k,d}(X)\subsetneq \mathscr A_{k,d}$ where the dimension of the fiber is at least $n-k$, and we give $\Par_{k,d}(X)$ the reduced scheme structure. It follows that $\Par_{k,d}(X)$ parametrizes collections $\ff=(f_0,\dots,f_k)$ of degree $d$ polynomials which fail to be parameters on $X$.

\begin{remark}\label{rmk:Par ZZ}
If we fix $X_{\ZZ}\subseteq \PP^r_{\ZZ}$, then we can follow the same construction to obtain a scheme $\Par_{k,d}(X_{\ZZ})\subseteq \mathscr A_{k,d}$. Writing $X_{\kk}$ as the pullback $X\times_{\Spec \ZZ}\Spec \kk$, we observe that the equations defining $\Sigma_{k,d}(X_{\kk})$ are obtained by pulling back the equations defining $\Sigma_{k,d}(X_{\ZZ})$.  It follows that $\Par_{k,d}(X_\ZZ)\times_{\Spec \ZZ} \Spec(\kk)$ has the same set-theoretic support as $\Par_{k,d}(X_\kk)$.  
\end{remark}

\begin{defn}
We let $\Par_{k,d}^{\bad}(X)$ be the locus of points in $\Par_{k,d}(X)$ where $f_0,\dots,f_{k-1}$ already fail to be parameters on $X$ and let $\Par^\good_{k,d}(X):=\Par_{k,d}(X)\setminus \Par^{\bad}_{k,d}(X).$ We set $\Par^\bad_{0,d}(X)=\varnothing$.
\end{defn}

\begin{remark}\label{rmk:splitting}
We have a splitting:
\begin{align*}
\mathscr A_{k,d} &\to \mathscr A_{k-1,d} \times \mathscr A_{0,d}\\
(f_0,\dots,f_{k})&\mapsto ((f_0,\dots,f_{k-1}),f_{k}).
\end{align*}
Letting $\pi: \Par_{k,d}(X)\to \mathscr A_{k-1,d}$ be the induced projection, we will speak about the degree of the image of $\pi$ (considered in $\mathscr A_{k-1,d}$) and the degree of a fiber of $\pi$  (considered in $\mathscr A_{0,d}$). 
\end{remark}

\begin{proof}[Proof of Theorem~A]
First consider the case $k=n$. There is a natural rational map from $\mathscr A_{n,d}$ to the Grassmanian $\Gr(n+1, S_d)$ given by sending the polynomials $(f_0,\dots,f_{n})$ to the linear space that they span. Inside of the Grassmanian, the locus of choices of $(f_0,\dots,f_{n})$ that all vanish on a point of $X$ is a divisor in the Grassmanian defined by the Chow form; see~\cite[3.2.B]{gkz}. The preimage of this hypersurface in $\mathscr A_{n,d}$ is a hypersurface contained in $\Par_{n,d}(X)$, and thus $\Par_{n,d}(X)$ has codimension $1$.

For $k<n$, we will induct on $k$. Let $k=0$. A polynomial $f_0$ will fail to be a parameter on $X$ if and only if $\dim X = \dim (X\cap \VV(f_0))$. This happens if and only if $f_0$ is a zero divisor on a top-dimensional component of $X$. Let $V$ be the reduced subscheme of some top-dimensional irreducible component of $X$ and let $\mathcal I_V$ be the defining ideal sheaf of $V$. Then the set of zero divisors of degree $d$ on $V$ will form a linear subspace in $\mathscr A_{0,d}$ corresponding to the elements of the vector subspace $H^0(\mathcal I_V(d))$. The codimension of $H^0(\mathcal I_V(d))\subseteq S_d$ is precisely given by the Hilbert function of the homogeneous coordinate ring of $V$ in degree $d$. By applying Lemma~\ref{lem:key Hilbert function}(1), we conclude that for all $d$ this linear space has codimension at least $\binom{n+d}{d}$. Since $\Par_{0,d}(X)$ is the union of these linear spaces over all top-dimensional components of $X$, this proves that $\codim \Par_{0,d}(X)\geq \binom{n+d}{d}$.

Take the induction hypothesis that we have proven the statement for $\Par_{j,d}(X')$ for all $X'\subseteq \PP^r$ and all $j\leq k-1$. We separate $\Par_{k,d}(X)=\Par^\bad_{k,d}(X)\sqcup \Par^\good_{k,d}(X)$ and will show that each locus has sufficiently large codimension. We begin with $\Par^\bad_{k,d}(X)$.  By definition, the projection $\pi$ from Remark~\ref{rmk:splitting} maps $\Par^\bad_{k,d}(X)$  onto $\Par_{k-1,d}(X)$.  We thus have:
\[
\codim(\Par^\bad_{k,d}(X), \mathscr A_{k,d})\geq \codim(\Par_{k-1,d}(X),\mathscr A_{k-1,d})\geq  \binom{n-k+1+d}{n-k+1} \geq \binom{n-k+d}{n-k},
\]
where the middle inequality follows by induction.

Now consider an arbitrary point $\ff=(f_0,\dots,f_{k})$ in $\Par^\good_{k,d}(X)$. By definition, $f_0,\dots,f_{k-1}$ must be parameters on $X$, and thus $\pi(\ff) \in \mathscr A_{k-1,d} \setminus \Par_{k-1,d}(X)$. Using the splitting of Remark~\ref{rmk:splitting}, the fiber of $\Par^\good_{k,d}(X)$ over $\ff$ can be identified with $\Par_{0,d}(X')$ where $X':=X\cap \VV(f_0,\dots,f_{k-1})$.  Since $(f_0,\dots,f_{k-1})\notin\Par_{k-1,d}(X)$, we have that $\dim X'=n-k$. The inductive hypothesis thus guarantees that
$
\codim \Par_{0,d}(X') \geq \binom{\dim X'+d}{d} = \binom{n-k+d}{d}.
$
\end{proof}

\section{Probabilistic Analysis I: Proof of Theorem~B}\label{sec:probabilistic}

Throughout this section, we let $X \subseteq \PP^r_{\FF_q}$ be a projective scheme of dimension $n$ over a finite field $\FF_q$. Recall that $S_d=H^0(\PP^r, \cO_{\PP^r}(d))$.  We define
\[
\cP_{d,k}=\left\{\begin{matrix}(f_0, f_1, \dots,f_{k}) \text{ that }\\ \text{ are parameters on $X$}\end{matrix}\right\}\subset S_{d}^{k+1}.
\]
In Theorem~B, we compute the following limit (which a priori might not exist):
\[
\lim_{d\to \infty}\Prob\left(\begin{matrix}(f_0, f_1,\dots,f_{k}) \text{ of degree $d$ } \\ \text{ are parameters on $X$}\end{matrix}\right):=\lim_{d\to \infty}\frac{\#\cP_{d,k}}{\#S_{d}^{k+1}}.
\]
As in the geometric case, there is a bifurcation between when $k=n$ and $k<n$. The case $k=n$ largely parallels Poonen's work~\cite{poonen}: we will sieve over closed points of $X$ and show that the asymptotic probability that $(f_0, f_1 \dots f_n)$ are parameters on $X$ equals the product of local probabilities at the closed points of $X$, and the resulting formula will correspond with the value of a zeta function. This approach was already worked out by Bucur and Kedlaya in~\cite{bucur-kedlaya}.  They assume that $X$ is smooth, but their proof does not need that assumption.

When $k<n$ we need to significantly alter Poonen's sieve. In this section, we focus on proving that the asymptotic probability converges to $1$ as $d\to \infty$. For this, we will use a coarse error bound based on the geometric picture developed in \S\ref{sec:geometric}. In \S\ref{sec:error}, we provide a deeper analysis of the limit probability based on the detailed geometry of $X$.

\begin{prop}\label{prop:precise estimate}
If $k<n$ then
\[
\Prob\left(\begin{matrix}(f_0,f_1,\dots,f_{k}) \text{ of degree $d$ } \\ \text{ are parameters on $X$}\end{matrix}\right)\geq 1 - \deghat(X)(1+d+d^2+\dots+d^k)q^{-\binom{n-k+d}{n-k}}.
\]
\end{prop}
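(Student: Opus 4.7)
The plan is to prove by induction on $k$ the equivalent statement that
\[
\Prob\bigl((f_0,\dots,f_k) \text{ are not parameters on } X\bigr) \leq \deghat(X)(1+d+\dots+d^k)\, q^{-\binom{n-k+d}{n-k}}.
\]

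First, I would handle the base case $k=0$: the polynomial $f_0$ fails to be a parameter on $X$ exactly when it vanishes identically on some top-dimensional reduced irreducible component $V\subseteq X$. For a fixed such $V$, this condition picks out the $\FF_q$-subspace $H^0(\mathcal{I}_V(d))\subseteq S_d$, so the probability is $q^{-H_V(d)}$, where $H_V$ is the Hilbert function of the homogeneous coordinate ring of $V$. Lemma~\ref{lem:key Hilbert function}(1) yields $H_V(d)\geq \binom{n+d}{n}$, and since the number of top-dimensional components is at most $\deghat(X)$, a union bound closes the base case.

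For the inductive step I would condition on $(f_0,\dots,f_{k-1})$ and decompose the failure event into two sub-events. Either $(f_0,\dots,f_{k-1})$ already fail to be parameters on $X$---a probability bounded by the inductive hypothesis---or they \emph{are} parameters, in which case $X':=X\cap \VV(f_0,\dots,f_{k-1})$ has dimension $n-k$ and failure of $(f_0,\dots,f_k)$ reduces to failure of the single polynomial $f_k$ on $X'$. Applying the base case to $X'$ and bounding $\deghat(X')\leq \deghat(X)\cdot d^k$ via Bezout (Lemma~\ref{lem:bezout}), the conditional failure probability is at most $\deghat(X)\,d^k\,q^{-\binom{n-k+d}{n-k}}$, \emph{uniformly} in the choice of $(f_0,\dots,f_{k-1})$. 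The inequality $\binom{n-k+1+d}{n-k+1}\geq \binom{n-k+d}{n-k}$ then lets me combine both bounds under a common exponential, producing the telescoped geometric sum $1+d+\dots+d^k$.

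The hard part will be the bookkeeping in the inductive step: two compatibilities must hold for the induction to close cleanly. First, Bezout must give precisely the factor $d^k$ (and no worse) in $\deghat(X')\leq \deghat(X)\,d^k$, so that the desired pattern $1+d+\dots+d^k$ actually emerges rather than some larger polynomial in $d$. Second, the base-case bound applied to $X'$ must be \emph{uniform} in the conditioning variables $(f_0,\dots,f_{k-1})$, since $X'$ itself varies with them. This uniformity is exactly what Lemma~\ref{lem:key Hilbert function}(1) provides, as it gives a lower bound on the Hilbert function depending only on the dimension of $X'$ and not on its embedding or degree---without such a bound the conditional estimate could not be integrated over the conditioning.
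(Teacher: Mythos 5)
Your proposal is correct and follows essentially the same route as the paper: induction on $k$ with the base case handled by a union bound over top-dimensional components using Lemma~\ref{lem:key Hilbert function}(1), and the inductive step split into the event that $(f_0,\dots,f_{k-1})$ already fail (inductive hypothesis) versus the event that they succeed and $f_k$ vanishes on a component of the $(n-k)$-dimensional $X'$, with $\deghat(X')\leq \deghat(X)d^k$ from Lemma~\ref{lem:bezout}. The two points you flag as delicate---the exact factor $d^k$ from Bezout and the uniformity of the conditional bound in $(f_0,\dots,f_{k-1})$---are exactly the points the paper's argument relies on, and both go through as you describe.
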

\begin{proof}
We induct on $k$ and largely follow the structure of the proof of Theorem~A. First, let $k=0$. A polynomial $f_0$ will fail to be a parameter on $X$ if and only if it is a zero divisor on a top-dimensional component $V$ of $X$. There are at most $\deghat(X)$ many such components. As argued in the proof of Theorem~A, the set of zero divisors on $V$ corresponds to the elements of $H^0(\PP^r,\mathcal I_V(d))$ which has codimension at least $\binom{n+d}{d}$ in $S_d$. It follows that
\[
\Prob\left(\begin{matrix}f_0  \text{ of degree $d$ is } \\ \text{ not a parameter on $X$}\end{matrix}\right)\leq \deghat(X)q^{-\binom{n+d}{d}}.
\]

Now consider the induction step. We will separately compute the probability that $\ff=(f_0,f_1,\dots,f_k)$ lies in $\Par^\bad_{k,d}(X)$ and the probability that $\ff$ lies in $\Par^\good_{k,d}(X)$.  By definition, the projection $\pi$ maps $\Par^\bad_{k,d}(X)$  onto $\Par_{k-1,d}(X)$, and by induction
\begin{align*}
\Prob\bigl(\pi(\ff)\in \mathscr P_{k-1,d}(X)(\FF_q)\bigr)&\leq \deghat(X)\left(1+d+d^2+\dots+d^{k-1}\right)q^{- \binom{n-k+1+d}{n-k+1}}\\
& \leq \deghat(X)\left(1+d+d^2+\dots+d^{k-1}\right)q^{- \binom{n-k+d}{n-k}}.
\end{align*}
We now assume $\ff\notin \Par^\bad_{k,d}(X)$. We thus have that $f_0,\dots,f_{k-1}$ are parameters on $X$. As in the proof of Theorem~A, the fiber $\pi^{-1}(\ff)$ can be identified with $\Par_{0,d}(X')$ where $X':=X\cap \VV(f_0,f_1,\dots,f_{k-1})$. By construction $\dim X'=n-k$ and by Lemma~\ref{lem:bezout}, $\deghat(X') \leq \deghat(X) \cdot d^k$. Our inductive hypothesis thus implies that
\[
\Prob\left(\begin{matrix}(f_0,\dots,f_k)\in \Par_{k,d}(X)(\FF_q)\text{ given} \\ \text{ that } (f_0,\dots,f_{k-1})\notin \Par_{k-1,d}(X)(\FF_q)\end{matrix} \right) \leq \deghat (X')q^{-\binom{n-k+d}{n-k}} \leq  \deghat(X) \cdot d^kq^{-\binom{n-k+d}{n-k}}.
\]
Combining the estimates for $\Par^\bad_{k,d}(X)$ and $\Par^\good_{k,d}(X)$ yields the proposition.
\end{proof}
\begin{proof}[Proof of Theorem~B] \ 

If $k<n$, then we apply Proposition~\ref{prop:precise estimate} to obtain
\begin{align*}
\lim_{d\to \infty} \Prob\left(\begin{matrix}(f_0,\dots,f_{k}) \text{ of degree $d$ } \\ \text{ are parameters on $X$}\end{matrix}\right)& \geq\lim_{d\to \infty} 1 - \deghat(X)\left(d^0+d^1+\dots+d^k\right)q^{-\binom{n-k+d}{n-k}}= 1.
\end{align*}

Now let $k=n$.  For completeness, we summarize the proof of \cite[Theorem~1.2]{bucur-kedlaya}. We fix $e$, which will go to $\infty$, and separate the argument into low, medium, and high degree cases.

\paragraph{{\em Low degree argument}}
For a zero dimensional subscheme $Y$, we have that $S_d$ surjects on $H^0(Y,\cO_Y(d))$ when $d\geq \deg Y-1$~\cite[Lemma~2.1]{poonen}.  So if $d>\deg P-1$, the probability that $f_0,f_1,\dots,f_n$ all vanish at a closed point $P\in X$ is $1 - q^{-(n+1)\deg P}$.  If $Y\subseteq X$ is the union of all points of degree $\leq e$, and if $d\geq \deg Y-1$, then the surjection onto $H^0(Y,\cO_Y(d))$ implies that the probabilities at the points $P\in Y$ behave independently.  This yields:
\[
\Prob\left(\begin{matrix}f_0,f_1,\dots,f_{n} \text{ of degree $d$ are} \\ \text{ parameters on $X$ at all points }\\ \text{$P\in X$ where $\deg(P)\leq e$}\end{matrix}\right) = \prod_{\overset{P\in X}{\deg(P)\leq e}} 1 - q^{-(n+1)\deg P}.
\]
\paragraph{{\em Medium degree argument}}
Our argument is nearly identical to~\cite[Lemma~2.4]{poonen}, and covers all points whose degree lies in the range $[e, \frac{d}{n+1}]$. For any such point $P\in X$, $S_d$ surjects onto $H^0(P,\cO_P(d))$ and thus the probability that $(f_0,f_1,\dots,f_n)$ all vanish at $P$ is $q^{-\ell(n+1)}$.  By~\cite{lang-weil}, $\#X(\FF_{q^\ell})\leq Kq^{\ell n}$ for some constant $K$ independent of $\ell$.  We have\begin{align*}
\Prob\left(\begin{matrix}f_0,f_1,\dots,f_{n} \text{ of degree $d$ all} \\ \text{ vanish at some $P\in X$ }\\ \text{ where $e<\deg(P)\leq \left\lfloor \frac{d}{n+1}\right\rfloor$}\end{matrix}\right)
\leq \sum_{\ell=e}^{ \left\lfloor \tfrac{d}{n+1}\right\rfloor} \#X(\FF_{q^{\ell}}) q^{-\ell(n+1)}
&\leq \sum_{\ell=e}^{\infty} Kq^{\ell n} q^{-(n+1)\ell}= \frac{Kq^{-e}}{1-q^{-1}}.
\end{align*}
This tends to $0$ as $e\to \infty$, and therefore does not contribute to the asymptotic limit.

\paragraph{{\em High degree argument}}
By the case when $k=n-1$, we may assume that $f_0,f_1,\dots,f_{n-1}$ form a system of parameters with probability $1-o(1)$. So we let $V$ be one of the irreducible components of this intersection (over $\FF_q$) and we let $R$ be its homogeneous coordinate ring. If $\deg V\leq \frac{d}{n+1}$, then it can be ignored as we considered such points in the low and medium degree cases. Hence, we can assume $\deg V >\frac{d}{n+1}$.  Since
$
\dim R_\ell \geq \min\{\ell+1,\deg R\}
$
for all $\ell$, the probability that $f_n$ vanishes along $V$ is at most $q^{-\lfloor \frac{d}{n+1}\rfloor -1}$. Hence the probability of vanishing on some high degree point is bounded by $O(d^nq^{-\lfloor \frac{d}{n+1}\rfloor -1})$ which is $o(1)$ as $d \to \infty$.

Combining the various parts as $e\to \infty$, we see that the low degree argument converges to $\zeta_X(n+1)^{-1}$ and the contributions from the medium and high degree points go to $0$.
\end{proof}

\begin{remark}
It might be interesting to consider variants of Theorem~B that allow imposing conditions along closed subschemes, similar to Poonen's Bertini with Taylor Coefficients~\cite[Theorem~1.2]{poonen}.  For instance, \cite[Theorem 1]{kedlaya-more-etale} might be provable by such an approach, though this would be more complicated than the original proof.
\end{remark}

Proposition~\ref{prop:precise estimate} also yields an effective bound on the degree of a full system of parameters over a finite field.  Sharper bounds can obtained if one allows the $f_i$ to have different degrees.  
\begin{cor}\label{cor:effective Noether normalization}
\
 \begin{enumerate}
	\item  If $d_1$ satisfies
$d_1^{n-1}q^{-d_1-1} < (n\cdot \deghat (X))^{-1}$, then there exist $g_0, g_1,\dots,g_{n-1}$ of degree $d_1$ that are parameters on $X$.
	\item  Let $X'$ be $0$-dimensional.  If $\max\{d_2+1,q\}\geq \deghat(X')$ then there exists a degree $d_2$ parameter on $X'$.
\end{enumerate}
\end{cor}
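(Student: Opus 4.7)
The plan for Part (1) is to invoke Proposition~\ref{prop:precise estimate} directly with $k = n-1$, so that $\binom{n-k+d}{n-k} = d+1$. This gives
\[
\Prob\bigl((g_0,\ldots,g_{n-1}) \text{ of degree } d_1 \text{ are parameters on } X\bigr) \geq 1 - \deghat(X)(1+d_1+\cdots+d_1^{n-1})q^{-(d_1+1)}.
\]
Bounding the geometric sum by $1 + d_1 + \cdots + d_1^{n-1} \leq n d_1^{n-1}$ (valid for $d_1 \geq 1$) and using the hypothesis $d_1^{n-1} q^{-d_1-1} < 1/(n\deghat(X))$, the right-hand side becomes strictly positive, so $\cP_{d_1,n-1}$ is nonempty and the desired parameters exist.

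For Part (2), let $P_1, \ldots, P_s$ be the reduced closed points of $X'$, so that $\sum_{i=1}^s \deg P_i = \deghat(X')$ and in particular $s \leq \deghat(X')$. A degree $d_2$ form $f_0$ is a parameter on $X'$ precisely when $\VV(f_0) \cap X' = \emptyset$, i.e.\ when $f_0$ does not vanish at any $P_i$. I would split the argument according to which side of $\max\{d_2+1,\, q\} \geq \deghat(X')$ is witnessed. In the case $d_2 + 1 \geq \deghat(X')$, set $Y = X'_{\mathrm{red}}$, so $\deg Y = \deghat(X')$ and $d_2 \geq \deg Y - 1$; the surjection lemma \cite[Lemma~2.1]{poonen} cited in the proof of Theorem~B then yields $S_{d_2} \twoheadrightarrow H^0(Y, \cO_Y(d_2))$, and I lift a section that is nonzero at each $P_i$ to obtain the required $f_0$.

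In the remaining case $q \geq \deghat(X')$, a nonzero constant handles $d_2 = 0$ directly. For $d_2 \geq 1$, I argue by subspace covering: the locus $W_i \subset S_{d_2}$ of forms vanishing at $P_i$ is a proper $\FF_q$-linear subspace, proper because $x_j^{d_2}$ does not vanish at $P_i$ for any coordinate with $x_j(P_i) \neq 0$. Writing $h = \dim S_{d_2}$, each $|W_i| \leq q^{h-1}$ and the $W_i$ all share $0$, so
\[
\Bigl|\bigcup\nolimits_{i=1}^s W_i\Bigr| \leq s\, q^{h-1} - (s-1) \leq q^h - q + 1 < q^h,
\]
using $s \leq q$. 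Thus $S_{d_2} \setminus \bigcup_i W_i$ is nonempty, yielding the desired parameter.

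The only step requiring genuine verification is the properness of each $W_i$ when $d_2 \geq 1$, which is handled by the monomial argument above; everything else is a direct application of Proposition~\ref{prop:precise estimate} together with a standard subspace-covering bound over $\FF_q$.
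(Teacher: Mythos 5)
Your proof is correct and follows essentially the paper's own route: part (1) is precisely the paper's application of Proposition~\ref{prop:precise estimate} with $k=n-1$, together with the bound $1+d_1+\cdots+d_1^{n-1}\leq n d_1^{n-1}$ that the paper leaves implicit, and the first case of part (2) is the paper's surjectivity argument via \cite[Lemma~2.1]{poonen} (your passage to $Y=X'_{\mathrm{red}}$ is in fact the cleaner way to match the degree hypothesis $d_2+1\geq \deghat(X')$, since the scheme-theoretic degree of $X'$ could exceed $\deghat(X')$). The one place you genuinely deviate is the case $q\geq \deghat(X')$: the paper runs a probabilistic union bound, estimating the chance of vanishing at some point of $X'$ by $\sum_P q^{-1}\leq \deghat(X')q^{-1}$, which is strictly below $1$ only when $q>\deghat(X')$; your subspace-covering count, which exploits that all the loci $W_i$ share the zero vector, gives $\bigl|\bigcup_i W_i\bigr|\leq sq^{h-1}-(s-1)<q^h$ for $1\leq s\leq q$, and therefore also settles the boundary case $q=\deghat(X')$ that the statement permits. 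The properness of each $W_i$ is correctly justified by your monomial observation, and since $s(q^{h-1}-1)+1$ is increasing in $s$, your final inequality holds for every admissible $s$. So your argument is complete and, at that boundary, slightly sharper than the paper's.
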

\begin{proof}
Applying Proposition~\ref{prop:precise estimate} in the case $k=n-1$ yields (1).  For (2), let $f$ be a random degree $d$ polynomial and let $P\in X'$ be a closed point.  Since the dimension of the image of $S_d$ in $H^0(P,\cO_P(d))$ is at least $\min\{d+1,\deg P\}$, the probability that $f$ vanishes at $P$ is at worst $q^{-\min\{d+1,\deg P\}}$ which is at least $q^{-1}$.  It follows that the probability that a degree $d$ function vanishes on some point of $X'$ is at worst $\sum_{P\in X'} q^{-1}\leq \deghat(X')q^{-1}$. Thus if $q>\deghat(X')$, this happens with probability strictly less than $1$.  On the other hand, if $d+1\geq \deghat(X')$ then polynomials of degree $d$ surject onto $H^0(X',\cO_{X'}(d))$ and hence we can find a parameter on $X'$ by choosing a polynomial that restricts to a unit on $X'$.
%
\end{proof}

\begin{proof}[Proof of Theorem~C]
If $\dim X=0$, then we can directly apply Corollary~\ref{cor:effective Noether normalization}(2) to find a parameter of degree $d=\max\{d,d^n\}$.  So we assume $n:=\dim X>0$.  Since $d>\log_q \deghat(X) + \log_q n+ n\log_q d$ it follows that 
$
(n\cdot \deghat(X))^{-1}>q^{-d}d^n>q^{-d-1}d^{n-1}.
$
Applying Corollary~\ref{cor:effective Noether normalization}(1), we find $g_0,g_1,\dots,g_{n-1}$ in degree $d$ that are parameters on $X$.  Let $X'=X\cap V(g_0,g_1,\dots,g_{n-1})$.  Since $\max\left\{d,\frac{q}{d^{n}}\right\}\geq \deghat(X)$ it follows that 
$\max\left\{d^{n+1},q\right\}\geq d^{n}\deghat(X)\geq \deghat(X')$, and Corollary~\ref{cor:effective Noether normalization}(2) yields a parameter $g_n$ of degree $d^{n+1}$ on $X'$.  Thus $g_0^{d^n}, g_1^{d^n},\dots, g_{n-1}^{d^n}, g_n$ are parameters of degree $d^{n+1}$ on $X$.

\end{proof}

\section{Probabilistic Analysis II: The Error Term}\label{sec:error}
In this section, we let $k<n$ and we analyze the error terms in Theorem~B more precisely. In particular, we will show that the probabilities are controlled by the probability of vanishing along an $(n-k)$-dimensional subvariety, with varieties of lowest degree contributing the most.
Theorem~\ref{thm:main finite field} is the estimate obtained by tracking subvarieties of degree $\leq e$.

\begin{notation}\label{notation:comps and equidim}
For a subscheme $Z\subseteq X$, we write $|Z|$ for the number of irreducible components of $Z$, and we write $\dim Z \equiv k$ if $Z$ is equidimensional of dimension $k$. 
\end{notation}

\begin{thm}\label{thm:main finite field}
Let $X\subseteq \PP^r_{\FF_q}$ be a projective scheme of dimension $n$. Fix $e$ and let $k<n$. The probability that random polynomials $f_0,f_1,\dots,f_k$ of degree $d$ are parameters on $X$ is
\[
\Prob\left(\begin{matrix}f_0,f_1,\dots,f_{k} \text{ of degree $d$ } \\ \text{ are parameters on $X$}\end{matrix}\right) = 1 \ - 
\sum_{\begin{smallmatrix}Z\subseteq X \text{reduced} \\ \dim Z \equiv n-k\\ \deg Z \leq e  \end{smallmatrix}}(-1)^{|Z|-1}q^{-(k+1)h^0(Z,\cO_Z(d))}+ o\left(q^{-e(k+1)\binom{n-k+d}{n-k}}\right).
\]
\end{thm}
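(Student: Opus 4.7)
I would adapt Poonen's sieve by sieving over higher-dimensional subvarieties instead of closed points. For each irreducible $(n-k)$-dimensional subvariety $V\subseteq X$, let $A_V$ be the event that $f_0,\dots,f_k$ all vanish on $V$. The polynomials $(f_0,\dots,f_k)$ fail to be parameters on $X$ precisely when $A_V$ occurs for some such $V$, and formal inclusion--exclusion gives
\[
\Prob\bigl((f_0,\dots,f_k)\text{ are not parameters on }X\bigr)=\sum_{Z}(-1)^{|Z|-1}\Prob(A_Z),
\]
where $Z$ ranges over reduced equidimensional $(n-k)$-dimensional subschemes of $X$ and $A_Z$ is the event that all $f_i$ vanish on $Z$. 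The plan is then to split this sum at $\deg Z=e$, show that the low-degree part yields the explicit main term in the theorem, and show that the high-degree tail is absorbed in the error term $o(q^{-e(k+1)\binom{n-k+d}{n-k}})$.

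\textbf{Main term ($\deg Z\le e$).} First I would note that reduced subschemes of $\PP^r$ of dimension $n-k$ and degree $\le e$ form a bounded family, so their Castelnuovo--Mumford regularity is bounded uniformly by a constant depending only on $e$ and $r$. Hence for $d$ large enough (depending on $e$), the restriction $S_d\twoheadrightarrow H^0(Z,\cO_Z(d))$ is surjective for every such $Z$, and so $\Prob(A_Z)=q^{-(k+1)h^0(Z,\cO_Z(d))}$. Since only finitely many such $Z\subseteq X$ satisfy $\deg Z\le e$, summing over them produces exactly the finite sum appearing in the theorem.

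\textbf{Tail ($\deg Z>e$).} Next I would estimate the absolute value of the tail by $\sum_{\deg Z>e}\Prob(A_Z)$. For any such $Z$ and for $d$ large enough (depending on $\deg Z$), Lemma~\ref{lem:key Hilbert function}(2) applied with $m=n-k$ gives $h^0(Z,\cO_Z(d))>(e+\epsilon)\binom{n-k+d}{n-k}$, and hence $\Prob(A_Z)<q^{-(k+1)(e+\epsilon)\binom{n-k+d}{n-k}}$. The number of reduced equidimensional $(n-k)$-dimensional subschemes of $X$ of any fixed degree $\ell$ is at most polynomial in $q$, using boundedness of the relevant Chow variety. Summing over $\ell>e$, the super-polynomial decay in $d$ dominates any polynomial growth in $q$, so the total tail is $o(q^{-e(k+1)\binom{n-k+d}{n-k}})$ as $d\to\infty$.

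\textbf{Main obstacle.} The hardest step will be making the tail bound uniform in $d$, because Lemma~\ref{lem:key Hilbert function}(2) applies only when $d\ge C(\deg Z)^{n-k+1}$. For subvarieties whose degree is very large compared to $d$, one has only the weaker bound of Lemma~\ref{lem:key Hilbert function}(1), namely $h^0(Z,\cO_Z(d))\ge\binom{n-k+d}{n-k}$. To close the gap I would handle the very-high-degree range by an inductive argument on $k$ in the spirit of Proposition~\ref{prop:precise estimate}, exploiting the splitting $\Par_{k,d}(X)=\Par^{\bad}_{k,d}(X)\sqcup\Par^{\good}_{k,d}(X)$ and the Bezout-type bound $\deghat(X\cap\VV(f_0,\dots,f_k))\le\deghat(X)d^{k+1}$ from Lemma~\ref{lem:bezout} to limit which $Z$ can even appear. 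Carefully matching the moderate-degree regime (controlled via Lemma~\ref{lem:key Hilbert function}(2)) with the very-high-degree regime (controlled via the inductive argument) so that the combined contribution remains $o(q^{-e(k+1)\binom{n-k+d}{n-k}})$ will be the delicate part.
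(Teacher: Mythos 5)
Your overall strategy --- sieving over $(n-k)$-dimensional subvarieties, extracting the main term from the degree-$\leq e$ family by inclusion--exclusion, and handling the very-high-degree range by conditioning on $f_0,\dots,f_{k-1}$ being parameters together with a Bezout bound --- is exactly the paper's. But as written there are two concrete gaps. First, your opening identity, inclusion--exclusion over \emph{all} reduced equidimensional $Z$, is not valid: infinitely many of the events $A_Z$ are nonempty (the zero tuple lies in every one of them), so every term satisfies $\Prob(A_Z)\geq q^{-(k+1)\dim S_d}>0$, the series has infinitely many terms bounded away from zero, and it is neither finite nor absolutely convergent. For the same reason your bound of the tail by $\sum_{\deg Z>e}\Prob(A_Z)$, taken over all unions $Z$, is a divergent sum for each fixed $d$. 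The paper avoids this entirely: it partitions the complement of $\cP_{d,k}$ into \emph{disjoint} pieces $\cL_{d,k,e}$, $\cM_{d,k,e}$, $\cH_{d,k,e}$ according to the least degree of a witnessing variety, applies inclusion--exclusion only to the finite family of irreducible $V$ of degree $\leq e$, and controls the medium and high pieces by union bounds over \emph{irreducible} varieties rather than over unions.

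Second, the ``delicate matching'' you defer is where the real content lies, and the cut you propose (at $d\geq C(\deg Z)^{n-k+1}$, i.e.\ a threshold moving with $d$) is not the right one. In the high-degree argument one conditions on $f_0,\dots,f_{k-1}$ being parameters (probability $1-o(q^{-e(k+1)\binom{n-k+d}{n-k}})$ by Proposition~\ref{prop:precise estimate}), after which only $f_k$ is available; the probability that $f_k$ vanishes on a component $V_i$ of $X\cap\VV(f_0,\dots,f_{k-1})$ is a \emph{single} factor $q^{-\dim R(V_i)_d}$, and for that single factor to be $o(q^{-e(k+1)\binom{n-k+d}{n-k}})$ one needs $\deg V_i>e(k+1)$, not merely $>e$. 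This forces the medium range to be $e<\deg V\leq e(k+1)$ --- a finite set of irreducible varieties, each contributing $q^{-(k+1)(e+\epsilon')\binom{n-k+d}{n-k}}=o(q^{-e(k+1)\binom{n-k+d}{n-k}})$ because there all $k+1$ polynomials must vanish --- while everything of degree $>e(k+1)$ goes to the conditioning argument. There Lemma~\ref{lem:key Hilbert function}(2) is applied with the \emph{fixed} threshold $e(k+1)$, so its hypothesis $d\geq C(e(k+1))^{n-k+1}$ depends only on $e,k,n$ and is eventually satisfied, and the number of relevant components is bounded by $\deghat(X)d^k/(e(k+1))$ via Lemma~\ref{lem:bezout}; no Chow-variety point counts are needed anywhere.
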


The terms of the above sum have the form $q^{-(k+1)h^0(Z,\cO_Z(d))}$, where $Z\subseteq X$ is an $(n-k)$-dimensional subvariety. Hence, the exponents are controlled by the Hilbert polynomial of a $(n-k)$-dimensional variety, and will grow like $d^{n-k}$, converging to $0$ rapidly as $d\to \infty$. 

Our proof of Theorem~\ref{thm:main finite field} adapts Poonen's sieve in a couple of key ways. The first big difference is that instead of sieving over closed points, we will sieve over $(n-k)$-dimensional subvarieties of $X$; this is because polynomials $(f_0,\dots,f_k)$ will fail to be parameters on $X$ only if they vanish along some $(n-k)$-dimensional subvariety.  

The second difference is that the resulting probability formula will not be a product of local factors. This is because the values of a function can never be totally independent along two higher dimensional varieties with a nontrivial intersection. For instance, Lemma~\ref{local_probability} shows that the probability that a degree $d$ polynomial vanishes along a line is $q^{-(d+1)}$, but the probability of vanishing along two lines that intersect in a point is $q^{-(2d+1)}>(q^{-(d+1)})^2$.  

The following result characterizes the individual probabilities arising in our sieve.

\begin{lemma}\label{local_probability}
If $Z\subseteq \PP_{\FF_{q}}^r$ is a reduced, projective scheme over a finite field $\FF_q$ with homogeneous coordinate ring $R$ then
\[
 \Prob\left(\begin{matrix}(f_0,\dots,f_{k}) \text{ of degree $d$ } \\ \text{ vanish along $Z$}\end{matrix}\right)=\left(\frac{1}{\#R_d}\right)^{k+1}.
 \]
If $d$ is at least the Castelnuovo-Mumford regularity of the ideal sheaf of $Z$, then
\[
 \Prob\left(\begin{matrix}(f_0,\dots,f_{k}) \text{ of degree $d$ } \\ \text{ vanish along $Z$}\end{matrix}\right)=q^{-(k+1)h^0(Z, \cO_Z(d))}.
 \]
\end{lemma}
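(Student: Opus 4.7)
The plan is to reduce both statements to an elementary count by tracking the short exact sequence
\[
0 \to (I_Z)_d \to S_d \to R_d \to 0,
\]
where $R = S/I_Z$ and $I_Z \subseteq S$ is the saturated homogeneous ideal of $Z$ (consistent with the convention in \S\ref{sec:Hilbert function}).

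For the first formula, the key observation is that a polynomial $f \in S_d$ vanishes along the reduced scheme $Z$ if and only if $f \in H^0(\PP^r,\cI_Z(d)) = (I_Z)_d$, i.e.\ $f$ maps to $0$ under the surjection $S_d \twoheadrightarrow R_d$. Hence the fraction of $f \in S_d$ vanishing on $Z$ equals
\[
\frac{\#(I_Z)_d}{\#S_d} = \frac{1}{\#R_d}.
\]
Since the $f_0,\dots,f_k$ are chosen independently and uniformly at random from $S_d$, the joint probability factors as a product of single-polynomial probabilities, yielding $(1/\#R_d)^{k+1}$.

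For the second formula, I would express $\#R_d$ in terms of $h^0(Z,\cO_Z(d))$ via the long exact cohomology sequence attached to $0 \to \cI_Z \to \cO_{\PP^r} \to \cO_Z \to 0$. Since $H^1(\PP^r,\cO_{\PP^r}(d)) = 0$, one obtains
\[
0 \to (I_Z)_d \to S_d \to H^0(Z,\cO_Z(d)) \to H^1(\cI_Z(d)) \to 0.
\]
When $d \geq \reg(\cI_Z)$, Mumford's theorem forces $H^1(\cI_Z(d)) = 0$, so the surjection $S_d \twoheadrightarrow H^0(Z,\cO_Z(d))$ has kernel exactly $(I_Z)_d$. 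Thus $R_d \cong H^0(Z,\cO_Z(d))$ and $\#R_d = q^{h^0(Z,\cO_Z(d))}$, so substitution into the first formula produces the claimed expression $q^{-(k+1)h^0(Z,\cO_Z(d))}$.

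There is no genuine obstacle here; the lemma is a piece of bookkeeping that identifies the three graded modules $(I_Z)_d$, $R_d$, and $H^0(Z,\cO_Z(d))$. The only point requiring care is the interpretation of the homogeneous coordinate ring via the saturated ideal, which is what ensures that the kernel of $S_d \to R_d$ coincides with the set of degree-$d$ forms vanishing along $Z$ for every $d$, and that the comparison with sheaf cohomology is clean once the regularity bound kicks in.
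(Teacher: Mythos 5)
Your proposal is correct and follows essentially the same route as the paper: count via the exact sequence $0\to (I_Z)_d\to S_d\to R_d\to 0$, use independence of the $k+1$ draws, and invoke vanishing of $H^1(\cI_Z(d))$ past the regularity to identify $R_d$ with $H^0(Z,\cO_Z(d))$. Your explicit remark that one must use the saturated ideal so that $(I_Z)_d$ is exactly the space of degree-$d$ forms vanishing on $Z$ is a point the paper leaves implicit, but otherwise there is nothing to add.
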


\begin{proof}
Let $I\subseteq S$ be the homogeneous ideal defining $Z$, so that $R=S/I$.
An element $h\in S_d$ vanishes along $Z$ if and only if it restricts to $0$ in $R_d$ i.e. if and only if it lies in $I_d$. Since we have an exact sequence of $\FF_q$-vector spaces:
\[
\begin{tikzcd}[column sep =2.5em]
0\rar& I_d\rar &S_d\rar &R_d\rar& 0
\end{tikzcd}
\]
we obtain
\[
\Prob(h \text{ vanishes on } Z) = \frac{\# I_d}{\# S_d} = \frac{1}{\# R_d}.
\]
For $k+1$ elements of $S_d$, the probabilities of vanishing along $Z$ are indepdenent and this yields the first statement of the lemma.

We write $\widetilde{I}$ for the ideal sheaf of $Z$.  If $d$ is at least the regularity $\widetilde{I}$, then $H^1(\PP_{\FF_q}^r, \widetilde{I}(d))=0$. Hence there is a natural isomorphism between $R_d$ and $H^0(Z,\cO_Z(d))$.  Thus, we have
\[
\frac{1}{\# R_d} = q^{-h^0(Z,\cO_Z(d))},
\]
yielding the second statement.
\end{proof}

\begin{proof}[Proof of Theorem~\ref{thm:main finite field}]
Throughout the proof, we set $\epsilon_{e,k}$ to be the error term for a given $e$ and $k$, namely
$
\epsilon_{e,k} := q^{-e(k+1)\binom{n-k+d}{n-k}}.
$
We also set:
\begin{align*}
\cP_{d,k}&:=\left\{\begin{matrix}(f_0,f_1,\dots,f_{k})\\ \text{ are parameters on $X$}\end{matrix}\right\}\\
\cL_{d,k,e}&:= \left\{\begin{matrix}(f_0,f_1,\dots,f_{k}) \text{ vanish along a variety $Z$}\\ \text{ where $\dim Z=(n-k)$ and $\deg(Z)\leq e$}\end{matrix}\right\}\\
\cM_{d,k,e}&:= \left\{\begin{matrix}(f_0,f_1,\dots,f_{k})\notin \cL_{d,k,e} \text{ which vanish along a variety $Z$}\\ \text{ where $\dim Z=(n-k)$ and $e<\deg(Z)\leq e(k+1)$}\end{matrix}\right\}\\
\cH_{d,k,e}&:= \left\{\begin{matrix}(f_0,f_1,\dots,f_{k})\notin \cL_{d,ke}\cup \cM_{d,k,e} \text{ which vanish along a variety $Z$ }\\ \text{ where $\dim Z=(n-k)$ and $e(k+1)<deg(Z)$}\end{matrix}\right\}.\\
\end{align*}
Note that if $(f_0,f_1,\dots,f_k)$ vanish along a variety of dimension $>n-k$ then they will also vanish along a high degree variety, and hence we do not need to count this case separately.
For $\ff=(f_0,f_1,\dots,f_k)\in S_d^{k+1}$, we thus have
\begin{align*}
\Prob(\ff\in \cP_{d,k})&= 1-\Prob(\ff\in \cL_{d,k,e}\cup \cM_{d,k,e}\cup \cH_{d,k,e}) \\
&=1-\Prob(\ff\in \cL_{d,k,e})-\Prob(\ff\in \cM_{d,k,e})-\Prob(\ff\in \cH_{d,k,e}).
\end{align*}

It thus suffices to show that
\begin{align*}
\Prob(\ff\in \cL_{d,k,e})=\sum_{\begin{smallmatrix}Z\subseteq X \text{reduced} \\ \dim Z \equiv n-k\\ \deg Z \leq e  \end{smallmatrix}}(-1)^{|Z|-1}q^{-(k+1)h^0(Z,\cO_Z(d))} + o\left(\epsilon_{e,k}\right)
\end{align*}
and that 
$\Prob(\cM_{d,k,e})$ and $\Prob(\cH_{d,k,e})$ are each in $o(\epsilon_{e,k})$.

We proceed by induction on $k$.  When $k=0$ the condition that $f_0$ is a parameter on $X$ is equivalent to $f_0$ not vanishing along a top-dimensional component of $X$. Thus, combining Lemma \ref{local_probability} with an inclusion/exclusion argument implies the exact result:
\[
\Prob(f_0\in \cP_{d,0})=1-\sum_{\begin{smallmatrix}Z\subseteq X \text{reduced} \\ \dim Z \equiv n-k  \end{smallmatrix}} (-1)^{|Z|-1}q^{-h^0(Z, \cO_Z(d))}.
\]
By basic properties of the Hilbert polynomial, as $d\to \infty$ we have
\[
h^{0}(Z, \cO_Z(d))= \frac{\deg(Z)}{n!}d^{n}+ o(d^n)=\deg(Z) \binom{n+d}{d} + o(d^n).
\]
Hence for the fixed degree bound $e$, we obtain:
\begin{align*}
\Prob(\cP_{d,0})&=1-\sum_{\begin{smallmatrix}Z\subseteq X \text{reduced} \\ \dim Z \equiv n-k\\ \deg Z\leq e  \end{smallmatrix}} (-1)^{|Z|-1}q^{-h^0(Z, \cO_Z(d))} -\sum_{\begin{smallmatrix}Z\subseteq X \text{reduced} \\ \dim Z \equiv n-k\\ \deg Z> e  \end{smallmatrix}} (-1)^{|Z|-1}q^{-h^0(Z, \cO_Z(d))}\\
&= 1-\sum_{\begin{smallmatrix}Z\subseteq X \text{reduced} \\ \dim Z \equiv n-k\\ \deg Z\leq e  \end{smallmatrix}} (-1)^{|Z|-1}q^{-h^0(Z, \cO_Z(d))} + o(\epsilon_{e,0}).
\end{align*}

We now consider the induction step. Let $\ff=(f_0,\dots,f_k)$ drawn randomly from $S_d^{k+1}$.  Here we separate into low, medium, and high degree cases.

\paragraph{{\em Low degree argument}}
Let $\mathbf V_{k,e}$ denote the set of integral projective varieties $V\subseteq X$ of dimension $n-k$ and degree $\leq e$. We have $\ff\in \cL_{d,k,e}$ if and only if $\ff$ vanishes on some $V\in \mathbf V_{k,e}$.
Since $\mathbf V_{k,e}$ is a finite set, we may use an inclusion-exclusion argument to get \begin{align*}
\Prob(\ff\in \cL_{d,k,e})
&=\sum_{\begin{smallmatrix}Z\subseteq X \text{ a union of} \\ V\in \mathbf V_{k,e}  \end{smallmatrix}}(-1)^{|Z|-1}\Prob\left(\begin{matrix}f_0,\dots,f_{k} \text{ of degree $d$ } \\ \text{ vanish along $Z$}\end{matrix}\right).
\intertext{If $\deg Z>e$ then Lemma \ref{local_probability} implies that those terms can be absorbed into the error term $o(\epsilon_{e,k})$. Moreover, assuming that $Z$ is a union of $V\in \mathbf{V}_{k,e}$ satisfying $\deg(Z)\leq e$ is equivalent to assuming $Z$ is reduced and equidmensional of dimensional $n-k$.  We thus have:}
&=\sum_{\begin{smallmatrix}Z\subseteq X \text{ reduced} \\ \dim Z \equiv n-k\\ \deg Z \leq e  \end{smallmatrix}}(-1)^{|Z|-1}\Prob\left(\begin{matrix}f_0,\dots,f_{k} \text{ of degree $d$ } \\ \text{ vanish along $Z$}\end{matrix}\right) + o(\epsilon_{e,k}).
\end{align*}

\paragraph{{\em Medium degree argument}}  We know that $\Prob(\ff\in\cM_{d,k,e})$ is bounded by the sum of the probabilities that $f$ vanishes along some irreducible variety $V$ in $\mathbf V_{e(k+1),k} \setminus \mathbf V_{e,k}$.
 \begin{align*}
\Prob(\ff\in \cM_{d,k,e}) \leq \sum_{\begin{smallmatrix}Z\in \mathbf V_{e(k+1),k} \setminus \mathbf V_{e,k}\end{smallmatrix}}\Prob\left(\begin{matrix}(f_0,f_1,\dots,f_{k}) \text{ of degree $d$ } \\ \text{ vanish along $Z$}\end{matrix}\right).
\end{align*}
Lemma \ref{local_probability} implies that each summand on the right-hand side lies in $o(\epsilon_{e,k})$. This sum is finite and thus $\Prob(\ff\in \cM_{d,k,e})$ is in $o(\epsilon_k)$.

\paragraph{{\em High degree argument}}
Proposition~\ref{prop:precise estimate} implies that $f_0,f_1,\dots,f_{k-1}$ are parameters on $X$ with probability 
$
1-o\left(q^{-\binom{n-k+1+d}{d}}\right)\geq 1-o(\epsilon_{e,k})
$
for any $e$. Hence we may restrict our attention to the case where $f_0,\dots,f_{k-1}$ are parameters on $X$.  

Let $V_1, V_2, \dots, V_s$ be the irreducible components of $
X':=X\cap \VV(f_0,f_1,\ldots,f_{k-1})$ that have dimension $n-k$.
We have that $f_0, f_1\ldots,f_k$ fail to be parameters on $X$ if and only if $f_k$ vanishes on some $V_i$. We can assume that $f_k$ does not vanish on any $V_i$ where $\deg V_i\leq e(k+1)$ as we have already accounted for this possibility in the low and medium degree cases.  After possibly relabelling the components, we let $V_1,V_2,\dots,V_t$ be the components of degree $>e(k+1)$ and $X''=V_1\cup V_2 \cup \cdots \cup V_t$.
Using Lemma~\ref{lem:bezout}, we compute
$
\deghat(X'')\leq \deghat(X') = \deghat(X) \cdot d^k.
$
It follows that $X''$ has at most $\frac{\widehat{\deg(X)}d^k}{e(k+1)}$ irreducible components.
 
Since the value of $d$ is not necessarily larger than the Castelnuovo-Mumford regularity of $V_i$, we cannot use a Hilbert polynomial computation to bound the probability that $f_k$ vanishes along $V_i$. Instead, we use the lower bound for Hilbert functions obtained in Lemma~\ref{lem:key Hilbert function}.  Let $\epsilon=1/2$, though any choice of $\epsilon$ would work. We write $R(V_i)$ for homogeneous coordinate ring of $V_i$.  For any $1\leq i \leq t$, Lemmas~\ref{lem:key Hilbert function} and~\ref{local_probability} yield
\[
\Prob\left(\begin{matrix}f_{k} \text{ of degree $d$ } \\ \text{ vanishes along $V_i$}\end{matrix}\right)=q^{-\dim R(V_i)_d} \leq q^{-(e(k+1)+\epsilon)\binom{n-k+d}{n-k}}
\]
whenever $d\geq Ce^{k+1}$. Combining this with our bound on the number of irreducible components of $X''$ gives
$
\Prob\left(\mathbf{f} \in \cH_{d,k,e}\right) \leq\frac{\deghat X}{e(k+1)}d^k q^{-(e(k+1)+\epsilon)\binom{n-k+d}{n-k}}$ which is in $o(\epsilon_{e,k})$.
\end{proof}

\begin{proof}[Proof of Corollary~\ref{cor:error}]
Let $N$ denote the number of $(n-k)$-planes spaces $L\subseteq \PP^r_{\FF_q}$ such that $L\subseteq X$. Choosing $e=1$ in Theorem~\ref{thm:main finite field}, we compute that
\begin{align*}
\Prob\left(\begin{matrix}f_0,f_1,\dots,f_{k} \text{ of degree $d$ } \\ \text{ are parameters on $X$}\end{matrix}\right) 
&=1 - Nq^{-(k+1)\binom{n-k+d}{n-k}} + o\left(q^{-(k+1)\binom{n-k+d}{n-k}}\right).
\intertext{It follows that}
\Prob\left(\begin{matrix}f_0,f_1,\dots,f_{k} \text{ of degree $d$ } \\ \text{ are \underline{not} parameters on $X$}\end{matrix}\right) 
&= Nq^{-(k+1)\binom{n-k+d}{n-k}} + o\left(q^{-(k+1)\binom{n-k+d}{n-k}}\right).
\end{align*}
Dividing both sides by $q^{-(k+1)\binom{n-k+d}{n-k}}$ and taking the limit as $d\to \infty$ yields the corollary.
\end{proof}

\section{Proofs over $\ZZ$ and $\FF_q[t]$}\label{sec:ZZ}
In this section we prove Theorem~D and Corollary~\ref{thm:noether normalization over ZZ}.

\begin{defn}\label{defn:density}
Let $B=\ZZ$ or $\FF_q[t]$ and fix a finitely generated, free $B$-module $B^s$ and a subset $\mathcal S\subseteq B^s$.  Given $a\in B^s$ we write $a=(a_1,a_2,\dots,a_s)$. The \defi{density} of $\mathcal S\subseteq B$ is
\[
\operatorname{Density}(\mathcal S):=\begin{dcases}
 \lim_{N\to \infty} \frac{\# \{a\in \mathcal S  | \max\{ |a_i| \} \leq N\} }{\# \{a\in \ZZ^s  | \max\{ |a_i| \} \leq N\} } & \text{ if $B=\ZZ$}\\
 \lim_{N\to \infty} \frac{\# \{a\in \mathcal S  | \max\{ \deg a_i  \} \leq N\} }{\# \{a\in \FF_q[t]^s  | \max\{ \deg a_i \} \leq N\} }& \text{ if $B=\FF_q[t]$}\
 \end{dcases}
\]
\end{defn}

\begin{proof}[Proof of Theorem~D]
For clarity, we will prove the result over $\ZZ$ in detail and at the end, mention the necessary adaptions for $\FF_q[t]$.

We first let $k<n$.  Given degree $d$ polynomials $(f_0,f_1,\dots,f_k)$ with integer coefficients and a prime $p$, let $(\overline{f_0},\overline{f_1},\dots,\overline{f_k})$ be the reduction of these polynomials mod $p$. Then $(\overline{f_0},\overline{f_1},\dots,\overline{f_k})$ will be parameters on $X_p$ if and only if the point $\overline{\ff}=(\overline{f}_0,\overline{f}_1,\dots,\overline{f}_k)$ lies $\Par_{d,k}(X_{\FF_p})$. As noted in Remark~\ref{rmk:Par ZZ}, this is equivalent to asking that $\overline{\ff}$ is an $\FF_p$-point of $\Par_{k,d}(X_{\ZZ})$. Thus, we may apply \cite[Theorem~1.2]{ekedahl} to $\Par_{d,k}(X_{\ZZ})\subseteq \mathscr A_{k,d}$ (using $M=1$) to conclude that
\[
 \Density\left\{\begin{matrix}(f_0,\dots,f_{k}) \text{ of degree $d$} \text{ that restrict} \\ \text{ to parameters on $X_p$ for all $p$}\end{matrix}\right\} = \prod_p \Prob \left(\begin{matrix}(f_0,\dots,f_{k}) \text{ of degree $d$} \\ \text{ restrict to  parameters on $X_p$}\end{matrix}\right).
\]
Applying Proposition~\ref{prop:precise estimate} to estimate the individual factors; we have:
\begin{align*}
 \Density\left\{\begin{matrix}(f_0,\dots,f_{k}) \text{ of degree $d$} \text{ that restrict} \\ \text{ to parameters on $X_p$ for all $p$}\end{matrix}\right\} &= \lim_{d\to \infty}\prod_p \Prob \left(\begin{matrix}(f_0,\dots,f_{k}) \text{ of degree $d$} \\ \text{ restrict to  parameters on $X_p$}\end{matrix}\right)\\
 &\geq   \lim_{d\to \infty}\prod_p  \left(1 - \deghat(X_p)(1+d+\dots+d^k)p^{-\binom{n-k+d}{n-k}}\right).\\
\intertext{Lemma~\ref{lem:globalD} shows that there is an integer $D$ where $D\geq \deghat(X_p)$ for all $p$. Moreover, $1+d+\dots+d^k\leq kd^k$ for all $d$, and hence:}
&\geq \lim_{d\to \infty}  \prod_p  \left(1 - Dkd^kp^{-\binom{n-k+d}{n-k}}\right).\\
\intertext{For $d\gg 0$ we can make $Dkd^kp^{-\binom{n-k+d}{n-k}}\leq p^{-d/2}$ for all $p$ simultaneously. Using $\zeta(n)$ for the Riemann-Zeta function, we get:}
&\geq \lim_{d\to \infty}  \prod_p  \left(1 - p^{-d/2}\right)
\geq \lim_{d\to \infty} \zeta(d/2)^{-1}
=1.
 \end{align*}

We now consider the case $k=n$.  This follows by a ``low degree argument'' exactly analogous to~\cite[Theorem~5.13]{poonen}. Fix a large integer $N$ and let $Y$ be the union of all closed points $P\in X$ whose residue field $\kappa(P)$ has cardinality at most $N$.  Since $Y$ is a finite union of closed, we see that for $d\gg 0$, there is a surjection:
\[
\begin{tikzcd}[column sep=3em]
H^0(\PP^r,\cO_{\PP^r}(d)) \rar& H^0(Y,\cO_Y(d))\cong \displaystyle\bigoplus_{\substack{P\in X \\ \#\kappa(P)\leq N}} H^0(P,\cO_P(d))\rar& 0.
\end{tikzcd}
\]
It follows that we have a product formula:
\begin{align*}
\Density\left\{\begin{matrix}(f_0,f_1,\dots,f_{n}) \text{ of degree $d$} \text{ do not vanish} \\ \text{ on a point $P$ with $\#\kappa(P)\leq N$}\end{matrix}\right\} = \prod_{P\in X, \#\kappa(P)\leq N}\left(1-\frac{1}{\#\kappa(P)^{n+1}}\right)
\end{align*}
This is certainly an upper bound on the density of $(f_0,f_1,\dots,f_n)$ that are parameters on $X_p$ for all $p$. As $N\to \infty$ the righthand side approaches $\zeta_{X}(n+1)^{-1}$. However, since the dimension of $X$ is $n+1$, this zeta function has a pole at $s=n+1$~\cite[Theorems~1 and 3(a)]{serre}. Hence this asymptotic density equals $0$.   This completes the proof over $\ZZ$.

Over $\FF_q[t]$, the key adaptation is to use~\cite[Theorem 3.1]{poonen-squarefree} in place of Ekedahl's result.  Poonen's result is stated for a pair of polynomials, but it applies equally well to $n$-tuples of polynomials such as the $n$-tuples defining $\Par_{k,d}(X)$.  In particular, one immediately reduces to proving an analogue of~\cite[Lemma 5.1]{poonen-squarefree}, for $n$-tuples of polynomials which are irreducible over $\FF_q(t)$ and which have gcd equal to $1$; but the $n=2$ version of the lemma then implies the $n\geq 2$ versions of the lemma.\footnote{We thank Bjorn Poonen for pointing out this reduction.}  The rest of our argument over $\ZZ$ works over $\FF_q[t]$.
\end{proof}

\begin{lemma}\label{lem:globalD}
Let $X\subseteq \PP^r_{B}$ be any closed subscheme. There is an integer $D$ where $D\geq \deghat(X_s)$ for all $s\in \Spec B$.
\end{lemma}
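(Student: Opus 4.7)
The plan is to produce $D$ from a fixed finite list of defining equations for $X$, and then apply Lemma~\ref{lem:bezout} fiberwise to bound every $\deghat(X_s)$ by a product of degrees of these generators.

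First, I would choose finitely many homogeneous generators $g_1,\dots,g_m \in B[x_0,\dots,x_r]$ for the ideal defining $X \subseteq \PP^r_B$. For each $s \in \Spec B$, the fiber $X_s \subseteq \PP^r_{\kappa(s)}$ is cut out set-theoretically by the reductions $\bar g_i \in \kappa(s)[x_0,\dots,x_r]$; any $\bar g_i$ that happens to vanish on the special fiber can simply be dropped from the intersection. Since $\deghat$ depends only on the underlying reduced irreducible components by Definition~\ref{defn:deghat}, this set-theoretic description is all that is relevant.

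Next, I would pass to affine cones: the affine cone $C(X_s) \subseteq \AA^{r+1}_{\kappa(s)}$ is cut out by the same reductions $\bar g_i$, and the coning construction puts the minimal irreducible components of $X_s$ in degree-preserving bijection with those of $C(X_s)$, so $\deghat(X_s) = \deghat(C(X_s))$. Applying Lemma~\ref{lem:bezout} with ambient $\AA^{r+1}_{\kappa(s)}$ (which has $\deghat = 1$) and the nonzero $\bar g_i$ then yields
\[
\deghat(X_s) \;=\; \deghat(C(X_s)) \;\leq\; \prod_{i\,:\,\bar g_i \neq 0} \deg g_i \;\leq\; \prod_{i=1}^m \deg g_i.
\]
Setting $D := \prod_{i=1}^m \deg g_i$ (or $D:=1$ in the trivial case $X = \PP^r_B$) gives a bound independent of $s$.

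I do not foresee a serious obstacle. The only points requiring care are the passage between $X_s$ and its affine cone when invoking Lemma~\ref{lem:bezout} (which is routine for $\deghat$) and the handling of generators that vanish on a particular fiber, which only decreases the relevant product and is therefore harmless.
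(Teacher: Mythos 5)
Your argument is correct, and it takes a genuinely different route from the paper. The paper's proof first takes a flattening stratification of $X$ over $B$ and uses semicontinuity of generator degrees within each stratum to find a single degree $e$ such that every fiber ideal is generated in degrees $\leq e$; it then invokes the Bayer--Mumford bound $\deghat(X_s)\leq \sum_{j=0}^n e^{r-j}$ and sets $D=re^r$. You instead fix once and for all finitely many homogeneous generators $g_1,\dots,g_m$ over $B$ (available by Noetherianity), observe that their reductions cut out each fiber, and apply the refined Bezout bound of Lemma~\ref{lem:bezout} fiberwise to get $D=\prod_i \deg g_i$. Your route is more elementary and self-contained: it avoids the flattening stratification and the semicontinuity argument entirely, and it sidesteps the question of how generator degrees of the \emph{saturated} fiber ideals vary, since $\deghat$ only sees the underlying reduced components and the (possibly unsaturated) ideal generated by the $\bar g_i$ suffices. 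What the paper's approach buys in exchange is a bound controlled by a single degree $e$ and the ambient dimension (polynomial in $e$), rather than a product over all chosen generators, which can be much larger; for the mere existence of $D$ both are equally good. Two trivial points you should make explicit if you write this up: degree-zero generators (nonzero constants of $B$ in the ideal) should be set aside separately, since on fibers where they survive they force $X_s=\emptyset$ while contributing a factor of $0$ to the product, and one should adopt the convention $\deghat(\emptyset)=0$ so that empty fibers are harmless.
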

\begin{proof}
First we take a flattening stratification for $X$ over $B$~\cite[Corollaire~6.9.3]{ega44}. Within each strata, the maximal degree of a minimal generator is semicontinuous, and we can thus find a degree $e$ where $X_s$ is generated in degree $e$ for all $s\in \Spec B$.  
By~\cite[Prop.~3.5]{bayer-mumford}, we then obtain that $\deghat(X) \leq \sum_{j=0}^n e^{r-j}$. In particular defining $D:=re^r$ will suffice.
\end{proof}

To prove Corollary~\ref{thm:noether normalization over ZZ}, we use Theorem~D to find a submaximal collection $(f_0,f_1,\dots,f_{n-1})$ which restrict to parameters on $X_s$ for all $s\in \Spec B$. This cuts $X$ down to a scheme $X'=X\cap \VV(f_0,f_1,\dots,f_{n-1})$ with $0$-dimensional fibers over each point $s$.  When $B=\ZZ$, such a scheme is essentially a union of orders in number fields, and we find the last element $f_n$ by applying classical arithmetic results about the Picard groups of rings of integers of number fields.  When $B=\FF_q[t]$, we use similar facts about Picard groups of affine curves over $\FF_q$.

An example illustrates this approach. Let $X=\PP^1_{\ZZ}=\Proj(\ZZ[x,y])$. A polynomial of degree $d$ will be a parameter on $X$ as long as the $d+1$ coefficients are relatively prime. Thus as $d\to \infty$, the density of these choices will go to $1$.  However, once we have fixed one such parameter, say $5x-3y$, it is much harder to find an element that will restrict to a parameter on $\ZZ[x,y]/(5x-3y)$ modulo $p$ for all $p$.  In fact, the only possible choices are the elements which restrict to units on $\Proj(\ZZ[x,y]/(5x-3y))$. Among the linear forms, these are
\[
\pm (7x-4y) + c(5x-3y)  \text{ for any } c\in \ZZ.
\]
Hence, these elements arise with density zero, and yet they form a nonempty subset.

Lemmas~\ref{lem:Pic torsion} and \ref{lem:Pic torsion curves} below are well-known to experts, but we sketch the proofs for clarity.
\begin{lemma}\label{lem:Pic torsion}
If $X'\subseteq \PP^r_{\ZZ}$ is closed and finite over $\Spec(\ZZ)$, then $\Pic(X')$ is finite.
\end{lemma}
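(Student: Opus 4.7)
Since every finite morphism is affine, write $X'=\Spec A$ for a finite $\ZZ$-algebra $A$; the goal is to show $\Pic(A)$ is finite. The plan is to first reduce to the case where $A$ is reduced, then pass to the normalization $\tilde A$ of $A$, and finally patch $\Pic(A)$ with $\Pic(\tilde A)$ via the conductor, invoking classical finiteness of ideal class groups.

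For the reduction to the reduced case, let $\mathcal N\subset\cO_{X'}$ be the nilradical sheaf and consider the short exact sequence of sheaves of abelian groups on $X'$,
\[
1\to 1+\mathcal N \to \cO_{X'}^{*}\to \cO_{X'_{\mathrm{red}}}^{*}\to 1.
\]
The subsheaf $1+\mathcal N$ carries a finite filtration by the subsheaves $1+\mathcal N^{i}$ whose successive quotients are isomorphic (via $1+x\mapsto x$) to the quasi-coherent sheaves $\mathcal N^{i}/\mathcal N^{i+1}$. Since $X'$ is affine, these quasi-coherent sheaves have vanishing higher cohomology, and an induction on the nilpotency index of $\mathcal N$ gives $H^{1}(X',1+\mathcal N)=0$. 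The long exact cohomology sequence then produces an injection $\Pic(A)\hookrightarrow \Pic(A_{\mathrm{red}})$, so it suffices to treat the reduced case.

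Now assume $A$ is reduced and let $\tilde A$ be the integral closure of $A$ in its total ring of fractions. Each minimal prime of $A$ has residue field either a number field or a finite field, so $\tilde A\cong \prod_{i}\cO_{K_{i}}\times \prod_{j}\FF_{q_{j}}$, and since $\ZZ$ is Nagata, $\tilde A$ is module-finite over $A$. Classical finiteness of class groups of number fields gives that $\Pic(\tilde A)$ is finite. Let $\mathfrak c\subset A$ be the conductor of $\tilde A$ in $A$. Both $A/\mathfrak c$ and $\tilde A/\mathfrak c$ are finite (hence Artinian) rings, so they have finite unit groups and trivial Picard groups. The conductor square is a Milnor square, giving the Mayer--Vietoris exact sequence
\[
(\tilde A/\mathfrak c)^{*} \to \Pic(A) \to \Pic(\tilde A)\oplus \Pic(A/\mathfrak c) \to \Pic(\tilde A/\mathfrak c),
\]
which sandwiches $\Pic(A)$ between the finite groups $(\tilde A/\mathfrak c)^{*}$ and $\Pic(\tilde A)$, yielding finiteness.

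The main obstacle I anticipate is the nilpotent reduction step: the sheaf $1+\mathcal N$ is not itself quasi-coherent, so one must argue through the filtration by $1+\mathcal N^{i}$ in order to invoke affine vanishing of coherent cohomology. The remaining steps, passage to the normalization and Milnor patching along the conductor, are classical once the module-finiteness of $\tilde A$ over $A$ is in hand.
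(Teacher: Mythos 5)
Your proof is correct, and the two halves compare differently with the paper's argument. The reduction to the reduced case is essentially the same idea as the paper's: the paper peels off one power of the nilradical at a time via the sequence $0\to \mathcal N^{m-1}\to \cO_{X'}^*\to\cO_{X''}^*\to 1$ and affine vanishing, whereas you filter $1+\mathcal N$ by the $1+\mathcal N^i$ all at once; both hinge on quasi-coherence of the graded pieces $\mathcal N^i/\mathcal N^{i+1}$ and affineness of $X'$, and you correctly flag that $1+\mathcal N$ itself is not quasi-coherent. For the reduced case, however, you take a genuinely different route. The paper inducts on the number of minimal primes, splitting off one component $B/Q$ at a time via the Mayer--Vietoris sequence for the union $V(Q)\cup V(Q')$, and it quotes finiteness of Picard groups of arbitrary orders in number fields (Neukirch, Theorem~12.12) directly. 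You instead pass to the normalization $\tilde A$, which handles all components and all non-maximality of the orders in a single step, and then descend through the conductor square; this trades the paper's induction for the Units--Pic Mayer--Vietoris sequence of a Milnor square, and only requires finiteness of class groups of maximal orders (Dedekind domains), at the cost of needing module-finiteness of $\tilde A$ over $A$ (which you correctly justify via excellence of $\ZZ$) and finiteness of $A/\mathfrak c$ and $\tilde A/\mathfrak c$ (which holds because $\mathfrak c$ contains a nonzerodivisor, so these quotients are Artinian finite $\ZZ$-modules, hence finite sets). Your approach is arguably cleaner in that it avoids the induction and the citation for non-maximal orders; the paper's approach avoids any discussion of normalization and conductors. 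Both are complete.
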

\begin{proof}
We first reduce to the case where $X'$ is reduced. Let $\mathcal N\subseteq \cO_{X'}$ be the nilradical ideal. If $X$ is nonreduced then there is some integer $m>1$ for which $\mathcal N^m=0$. Let $X''$ be the closed subscheme defined by $\mathcal N^{m-1}$. 
We have a short exact sequence
$
0\to \mathcal N^{m-1}\to \cO_{X'}^*\to \cO_{X''}^*\to 1
$
where the first map sends $f\mapsto 1+f$.
Since $H^1(X',\mathcal N^{m-1})=H^2(X',\mathcal N^{m-1})=0$, taking cohomology yields an isomorphism $\Pic(X')\cong \Pic(X'')$. Iterating this argument, we may assume $X'$ is reduced.

We now have $X'=\Spec(B)$ where $B$ is a finite, reduced $\ZZ$-algebra. If $Q$ is a minimal prime of $B$, then $B/Q$ is either zero dimensional or an order in a number field, and hence has a finite Picard group~\cite[Theorem~12.12]{neukirch}.
If $Q'$ is the intersection of all of the other minimal primes $B$. Then we again have an exact sequence in cohomology
%
\[
\begin{tikzcd}[column sep=1.5em]
\dots \rar& (B/(Q+Q'))^* \rar& \Pic(X') \rar& \Pic(B/Q)\oplus \Pic(B/Q') \rar& \dots
\end{tikzcd}
\]
Since $(B/(Q+Q'))^*$ is a finite set, and since $B/Q$ and $B/Q'$ have fewer minimal primes than $B$, we may use induction to conclude that $\Pic(X')$ is finite.
\end{proof}

\begin{lemma}\label{lem:Pic torsion curves}
If $C$ is an affine curve over $\FF_q$, then $\Pic(C)$ is finite.
\end{lemma}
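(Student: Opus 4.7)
The plan is to mirror the three-step reduction used in the proof of Lemma~\ref{lem:Pic torsion}, and then handle the remaining case (smooth, irreducible, affine) using the Jacobian of the projective completion. First I would reduce to the case where $C$ is reduced via the nilpotent argument already used above: if $\mathcal N$ is the nilradical of $\cO_C$ with $\mathcal N^m=0$, then the sequence $0\to \mathcal N^{m-1}\to \cO_C^*\to \cO_{C''}^*\to 1$, together with the vanishing of $H^i(C,\mathcal N^{m-1})$ for $i\geq 1$ (since $C$ is affine), produces an isomorphism $\Pic(C)\cong \Pic(C'')$. Iterating reduces to $C$ reduced. Then I would reduce to the irreducible case using the same minimal-prime Mayer--Vietoris argument as in Lemma~\ref{lem:Pic torsion}: if $C=\Spec(B)$ with $B$ reduced and $Q,Q'$ are as there, then $B/(Q+Q')$ is an Artinian $\FF_q$-algebra, hence finite, so the exact sequence provided by $\Spec(B/Q)\sqcup \Spec(B/Q')\to \Spec(B)$ reduces finiteness of $\Pic(C)$ to finiteness of $\Pic(B/Q)$ and $\Pic(B/Q')$.

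Next I would reduce to the case $C$ is a smooth curve over $\FF_q$ by passing to the normalization $\pi:\widetilde C\to C$, which is finite. Since $\pi$ is finite, $R^i\pi_*=0$ for $i>0$, so $H^i(\widetilde C,\cO_{\widetilde C}^*)=H^i(C,\pi_*\cO_{\widetilde C}^*)$. The cokernel $\mathcal F$ of the inclusion $\cO_C^*\hookrightarrow \pi_*\cO_{\widetilde C}^*$ is a sheaf supported on the finite singular locus of $C$, with finite stalks (each being the units of a finite $\FF_q$-algebra modulo the units of a subalgebra); hence $H^0(C,\mathcal F)$ is finite and $H^1(C,\mathcal F)=0$. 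The long exact cohomology sequence then yields a surjection $\Pic(C)\twoheadrightarrow \Pic(\widetilde C)$ whose kernel is a quotient of $H^0(C,\mathcal F)$, reducing the problem to the smooth affine irreducible case.

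In that case I would compactify $\widetilde C$ to a smooth projective irreducible curve $\overline C$ over $\FF_q$ by adjoining a nonempty finite set $S=\{P_1,\dots,P_\ell\}$ of closed points. The restriction sequence gives a right-exact sequence
\[
\ZZ^{\ell}\xrightarrow{\,(P_1,\dots,P_\ell)\,}\Pic(\overline C)\twoheadrightarrow \Pic(\widetilde C)\to 0,
\]
while $\Pic(\overline C)$ sits in the short exact sequence
\[
0\to J(\FF_q)\to \Pic(\overline C)\xrightarrow{\deg}\ZZ\to 0,
\]
where $J=\operatorname{Pic}^0(\overline C)$ is the Jacobian of $\overline C$. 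Because $J$ is a projective variety over the finite field $\FF_q$, the group $J(\FF_q)$ is finite. Since $S$ is nonempty, the image of the boundary map contains an element of strictly positive degree, so passing to the quotient $\Pic(\widetilde C)$ turns the $\ZZ$ on the right into the finite group $\ZZ/d\ZZ$ with $d=\gcd_i \deg P_i>0$. Thus $\Pic(\widetilde C)$ is a finite extension of finite groups, hence finite.

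The routine parts are the two reductions; the essential input is the finiteness of $J(\FF_q)$, which I expect to be the main conceptual step but is classical. The step most likely to need care in writing up is the normalization step, to make sure the cokernel sheaf $\mathcal F$ is genuinely finite on global sections (which requires observing that both $\cO_C$ and $\pi_*\cO_{\widetilde C}$ are finitely generated $\FF_q$-algebras so that the quotient at each singular point has finite residue ring).
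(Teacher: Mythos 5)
Your proposal is correct and follows essentially the same route as the paper: reduce to the integral case by the nilradical and minimal-prime arguments of Lemma~\ref{lem:Pic torsion}, pass to the normalization via the conductor/units sequence (this is exactly the content of the citation to \cite[Proposition 12.9]{neukirch}), and conclude in the smooth case from the finiteness of $\operatorname{Jac}(\overline C)(\FF_q)$ together with the fact that the boundary points have positive degree. You have simply written out the steps the paper delegates to references, and your treatment of the smooth affine case (as a quotient of $\Pic(\overline C)$ rather than a subgroup of $\Pic^0(\overline C)$) is, if anything, the more careful formulation.
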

\begin{proof}
If $C$ fails to be integral, then an argument entirely analogous to the proof of Lemma~\ref{lem:Pic torsion} reduces us to the case $C$ is integral.  
We next assume that $C$ is nonsingular and integral, and that $\overline{C}$ is the corresponding nonsingular projective curve.  Since $C$ is affine we have
$\Pic(C)=\Pic^0(C)\subseteq \Pic^0(\overline{C})\subseteq \operatorname{Jac}(\overline{C})(\FF_q),
$ which is a finite set.
If $C$ is singular, then the finiteness of $\Pic(C)$ follows from the nonsingular case by a minor adapation of the proof of~\cite[Proposition 12.9]{neukirch}.
\end{proof}

\begin{proof}[Proof of Corollary~\ref{thm:noether normalization over ZZ}]
By Theorem~D, for $d\gg 0$ we can find polynomials $f_0,f_1,\dots,f_{n-1}$ of degree $d$ that restrict to parameters on $X_s$ for all $s\in \Spec B$. Let $X':=\VV(f_0,f_1,\dots,f_{n-1})\cap X$, which is finite over $B$ by construction. Let $A$ be the finite $B$-algebra where $\Spec A =X'$.  
Lemma~\ref{lem:Pic torsion} or ~\ref{lem:Pic torsion curves}  implies that $H^0(X',\cO_{X'}(e))=A$ for some $e$.  We can thus find a polynomial $f_n$ of degree $e$ mapping onto a unit in the $B$-algebra $A$.  It follows that $\VV(f_n)\cap X'=\emptyset$.  Replace $f_i$ by $f_i^{e}$ for $i=0,\dots,n-1$ and replace $f_n$ by $f_n^{d}$. Then we have $f_0,f_1,\dots,f_n$ of degree $d':=de$ and restricting to parameters on $X_s$ for all $s\in \Spec(B)$ simultaneously.

We thus obtain a proper morphism $\pi: X\to \PP^n_{B}$ where $X_s\to \PP^n_{\kappa(s)}$ is finite for all $s$.  Since $\pi$ is quasi-finite and proper, it is finite by \cite[Th\'{e}or\`{e}me 8.11.1]{ega43}.
\end{proof}

The following generalizes Corollary~\ref{thm:noether normalization over ZZ} to other graded rings.

\begin{cor}\label{cor:graded rings}
Let $B=\ZZ$ or $\FF_q[t]$ and let $R$ be a graded, finite type $B$-algebra where $\dim R\otimes_{\ZZ} \FF_p=n+1$ for all $p$.  Then there exist $f_0,f_1,\dots,f_n$ of degree $d$ for some $d$ such that $B[f_0,f_1,\dots,f_n]\subseteq R$ is a finite extension.
\end{cor}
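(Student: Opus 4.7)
The plan is to reduce to Corollary~\ref{thm:noether normalization over ZZ} via the $\Proj$ construction, then upgrade the resulting geometric finiteness to algebraic finiteness of a graded ring extension.

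Set $X := \Proj R$. After replacing $R$ by a sufficiently high Veronese subring $R^{(m)}$ --- which is harmless, since $R$ is integral (hence finite) over any of its Veronese subrings via $x^m \in R^{(m)}$ for each homogeneous $x$ --- one obtains a graded surjection $B[x_0, \dots, x_r] \twoheadrightarrow R$ presenting $X$ as a closed subscheme of $\PP^r_B$. The hypothesis $\dim(R \otimes_B \kappa(s)) = n+1$ translates, via the dimension drop of $\Proj$, to $\dim X_s = n$ for every $s \in \Spec B$. Applying Corollary~\ref{thm:noether normalization over ZZ} then yields a degree $d$ and elements $f_0, \dots, f_n \in R_d$ inducing a finite morphism $\pi \colon X \to \PP^n_B$.

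The main step is to deduce from the finiteness of $\pi$ that $B[f_0, \dots, f_n] \subseteq R$ is a finite ring extension. Since $\pi$ is finite, $\pi_* \cO_X$ is coherent on $\PP^n_B$, so Serre's theorem gives that
\[
M := \bigoplus_{e \geq 0} H^0\bigl(\PP^n_B, (\pi_* \cO_X)(e)\bigr) = \bigoplus_{e \geq 0} H^0(X, \cO_X(de))
\]
is a finitely generated graded module over $B[z_0, \dots, z_n]$, with $z_i$ acting as $f_i$. Because $R$ is a finitely generated graded $B$-algebra, Serre vanishing gives an isomorphism $R_j \xrightarrow{\sim} H^0(X, \cO_X(j))$ for all $j \gg 0$, and in particular the $d$-th Veronese $R^{(d)} := \bigoplus_{e \geq 0} R_{de}$ agrees with $M$ in all sufficiently high degrees. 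Since $B[f_0, \dots, f_n]$ is Noetherian and the finitely many low-degree pieces of $R^{(d)}$ are each finitely generated over $B$, this forces $R^{(d)}$ to be finite over $B[f_0, \dots, f_n]$. Combined with the finiteness of $R$ over $R^{(d)}$ (by the same integral-extension argument as above), this gives finiteness of $R$ over $B[f_0, \dots, f_n]$.

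The main obstacle is precisely the bridge from the geometric finiteness of $\pi$ to honest module-finiteness of the graded ring: the passage between a graded ring and its associated coherent sheaf is lossy in low degrees, and one has to combine Serre vanishing with the Veronese trick to transfer the high-degree comparison into a global statement. A minor but unavoidable bit of bookkeeping is the Veronese indexing, since the $f_i$ sit in degree $d$ rather than degree $1$, forcing the identification $\pi^* \cO_{\PP^n_B}(1) = \cO_X(d)$ to enter the picture throughout.
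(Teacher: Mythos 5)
Your proposal is correct, and its skeleton --- pass to a high Veronese subring generated in degree one, realize $X=\Proj R$ as a closed subscheme of $\PP^r_B$ with $n$-dimensional fibers, and invoke Corollary~\ref{thm:noether normalization over ZZ} --- is exactly the paper's. Where you genuinely diverge is in the bridge from the finite morphism $\pi$ back to finiteness of the graded extension. The paper arranges this at the outset: in the Veronese step it also discards the $R'_+$-torsion submodule, so that the kernel of $B[x_0,\dots,x_r]\twoheadrightarrow R'$ is saturated and $R'$ is literally the homogeneous coordinate ring of $X$; then $X\cap\VV(f_0,\dots,f_n)=\emptyset$ puts a power of $R'_+$ inside $(f_0,\dots,f_n)$, and a graded Nakayama-type argument finishes directly at the level of rings. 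You instead leave $R$ alone and compare it with $\bigoplus_e H^0(X,\cO_X(de))$ via Serre finiteness and the high-degree agreement $R_j\cong H^0(X,\cO_X(j))$, absorbing the low-degree discrepancy by Noetherianity. Both routes work: the paper's torsion-killing makes the last step nearly formal and avoids sheaf cohomology entirely, while your cohomological argument makes explicit the high-degree comparison that the paper's ``it follows that'' elides, at the cost of the extra machinery and the $\cO_X(d)=\pi^*\cO_{\PP^n_B}(1)$ bookkeeping you flagged. (Both your argument and the paper's tacitly assume each graded piece of $R$ --- in particular $R_0$ --- is a finite $B$-module, which is the standard reading of ``graded finite type $B$-algebra'' here; with $R_0$ allowed to be, say, a polynomial ring over $B$ in degree-zero variables the statement itself degenerates.)
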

\begin{proof}
After replacing $R$ by a high degree Veronese subring $R'$, we may assume that $R'$ is generated in degree one and contains no $R'_{+}$-torsion submodule, where $R'_+\subseteq R'$ is the homogeneous ideal of strictly positive degree elements.  Let $r+1$ be the number of generators of $R'_1$.  Then there is a surjection $\phi\colon B[x_0,\dots,x_r]\to R'$ inducing an embedding of $X:=\Proj(R')\subseteq \PP^r_{B}$.  Since $R'$ contains no $R'_{+}$-torsion submodule, the kernel of $\phi$ will be saturated with respect to $(x_0,x_1,\dots,x_r)$ and hence $R'$ will equal the homogeneous coordinate ring of $X$.  Choosing $f_0,f_1,\dots,f_n$ as in Corollary~\ref{thm:noether normalization over ZZ}, it follows that $B[f_0,f_1,\dots,f_n]\subseteq R'$ is a finite extension, and thus so is $B[f_0,f_1,\dots,f_n]\subseteq R$.
\end{proof}

\section{Examples}\label{sec:examples}

\begin{example}\label{ex:linear}
By Corollary~\ref{cor:error}, it is more difficult to randomly find parameters on surfaces that contain lots of lines.  Consider $\VV(xyz)\subset \PP^3$ which contains substantially more lines than $\VV(x^2+y^2+z^2)\subset\PP^3$.  Using Macaulay2~\cite{M2} to select 1,000,000 random pairs $(f_0,f_1)$ of polynomials of degree two, the proportion that failed to be systems of parameters were:
\[
\begin{tabular}{| c | c | c | }\hline
& $\VV(xyz)$ & $\VV(x^2+y^2+z^2)$ \\ \hline
$\FF_2$ & .2638 & .1179 \\ \hline
$\FF_3$ & .0552 & .0059\\ \hline
$\FF_5$ & .0063 & .0004 \\ \hline
\end{tabular}
\]
\end{example}

\begin{example}
Let $X\subseteq \PP^3_{\FF_q}$ be a smooth cubic surface. Over the algebraic closure $X$ has 27 lines, but it has between $0$ and $27$ lines defined over $\FF_q$. For example, working over $\FF_4$, the Fermat cubic surface $X'$ defined by $x^3+y^3+z^3+w^3$ has 27 lines, while the cubic surface $X$ defined by $x^3+y^3+z^3+aw^3$ where $a\in \FF_4\setminus\FF_2$ has no lines defined over $\FF_4$ \cite{debarre-laface}. It will thus be more difficult to find parameters on $X$ than on $X'$. 
Using Macaulay2~\cite{M2} to select 100,000 random pairs $(f_0,f_1)$ of polynomials of degree two, 0.62\% failed to be parameters on $X$ whereas no choices whatsoever failed to be parameters on $X'$.  This is in line with the predictions from Corollary~\ref{cor:error}; for instance, in the case of $X$, we have $27\cdot 4^{-2\cdot 3}\approx 0.66\%$.
\end{example}

\begin{example}\label{ex:deg60}
Let $X=[1:4]\cup [3:5]\cup [4:5] = \mathbb V((4x-y)(5x-3y)(5x-4y))\subseteq \PP^1_{\ZZ}$ and let $R$ be the homogeneous coordinate ring of $X$.  The fibers are $0$-dimensional so finding a Noether normalization $X\to \PP^0_{\ZZ}$ is equivalent to finding a single polynomial $f_0$ that restricts to a unit on all of the points simultaneously.  We can find such an $f_0$ of degree $d$ if and only if the induced map of free $\ZZ$-modules $\ZZ[x,y]_d \to R_d$ is surjective.  A computation in Macaulay2~\cite{M2} shows that this happens if and only if $d$ is divisible by $60$.
\end{example}

\begin{example}\label{ex:flat ZZ}
Let $R=\ZZ[x]/(3x^2-5x)\cong \ZZ\oplus \ZZ[\frac{1}{3}]$. This is a flat, finite type $\ZZ$-algebra where every fiber has dimension $0$, yet it is not a finite extension of $\ZZ$.  However, if we take the projective closure of $\Spec(R)$ in $\PP^1_{\ZZ}$, then we get $\Proj(\overline{R})$ where $\overline{R}=\ZZ[x,y]/(3x^2-5xy)$. If we then choose $f_0:=4x-7y$, we see that $\ZZ[f_0]\subseteq \overline{R}$ is a finite extension of graded rings.
\end{example}

\begin{example}
Let $\kk$ be a field and let $X=[1:1+t]\cup [1-t:1]=\mathbb V((y-(1+t)x)(x-(1-t)y))\subseteq \PP^1_{\kk[t]}$.  Let $R$ be the homogeneous coordinate ring of $X$.  In degree $d$, we have the map
$
\phi_d: \kk[t][x,y]_d \cong \kk[t]^{d+1}\to R_d\cong \kk[t]^2.
$
Choosing the standard basis $x^d, x^{d-1}y, \dots, y^d$ for the source of $\phi_d$, and the two points of $X$ for the target, we can represent $\phi_d$ by the matrix
\[
\begin{pmatrix}
1&1+t&(1+t)^2&\dots & (1+t)^d\\
(1-t)^d & (1-t)^{d-1}&(1-t)^{d-2}&\dots & 1
\end{pmatrix}.
\]
It follows that
$
\im \phi_d =\im \begin{pmatrix}t^2 & (1+t)^d\\0&1  \end{pmatrix} = \im \begin{pmatrix}t^2 & 1+dt\\0&1  \end{pmatrix}.
$
The image of $\phi_d$ thus contains a unit if and only if the characteristic of $\kk$ is $p$ and $p|d$.  In particular, if $\kk=\QQ$, then we cannot find a polynomial $f_0$ inducing a finite map $X\to \PP^0_{\QQ[t]}$.
\end{example}

\begin{example}\label{example:failure-high-dim}
Let $\kk$ be any field, let $B=\kk[s,t]$, and let $X=[s:1]\cup [1:t] =\mathbb V((x-sy)(y-tx))\subseteq \PP^1_{B}$.  We claim that for any $d>0$, there does not exist a polynomial that restricts to a parameter on $X_b$ for each point $b\in B$.   Assume for contradiction that we had such an $f=\sum_{i=0}^d c_is^it^{d-i}$ with $c_i\in B$.  After scaling, we obtain
\[
f([s:1])=c_0s^d+c_1s^{d-1}+\cdots+c_d=1\quad \text{ and } \quad
f([1:t])=c_0+c_1t+\cdots+c_dt^d=\lambda
\]
where $\lambda\in B^*=\kk^*$.  Substituting for $c_d$ we obtain
\begin{align*}
f([1:t])
&=c_0+c_1t+\cdots+c_{d-1}t^{d-1}+\left(1-\left(c_0s^d+c_1s^{d-1}+\cdots+c_{d-1}s\right)\right)t^d=\lambda,
\end{align*}
which implies that
\begin{align*}
\lambda-t^d&=c_0+c_1t+\cdots+c_{d-1}t^{d-1}-\left(c_0s^d+c_1s^{d-1}+\cdots+c_{d-1}s\right)t^d\\
&=(c_0-c_0s^dt^d)+(c_1t-c_1s^{d-1}t^d)+\cdots+(c_{d-1}t^{d-1}-c_{d-1}st^d)
=(1-st)h(s,t)
\end{align*}
where $h(s,t)\in \kk[s,t]$. This implies that $\lambda-t^d$ is divisible by $(1-st)$, which is a contradiction.
\end{example}

\begin{bibdiv}
\begin{biblist}

\bib{abhyankar-kravitz}{article}{
   author={Abhyankar, Shreeram S.},
   author={Kravitz, Ben},
   title={Two counterexamples in normalization},
   journal={Proc. Amer. Math. Soc.},
   volume={135},
   date={2007},
   number={11},
   pages={3521--3523},
}

\bib{achinger}{article}{
   author={Achinger, Piotr},
   title={$K(\pi,1)$-neighborhoods and comparison theorems},
   journal={Compos. Math.},
   volume={151},
   date={2015},
   number={10},
   pages={1945--1964},
}
\bib{bayer-mumford}{article}{
   author={Bayer, Dave},
   author={Mumford, David},
   title={What can be computed in algebraic geometry?},
   conference={
      title={Computational algebraic geometry and commutative algebra
      (Cortona, 1991)},
   },
   book={
      series={Sympos. Math., XXXIV},
      publisher={Cambridge Univ. Press, Cambridge},
   },
   date={1993},
   pages={1--48},
}

\bib{benoist}{article}{
   author={Benoist, Olivier},
   title={Le thŽ\'eorem\`e de Bertini en famille},
   language={French, with English and French summaries},
   journal={Bull. Soc. Math. France},
   volume={139},
   date={2011},
   number={no.~4},
   pages={555--569},
}

\bib{brennan-epstein}{article}{
   author={Brennan, Joseph P.},
   author={Epstein, Neil},
   title={Noether normalizations, reductions of ideals, and matroids},
   journal={Proc. Amer. Math. Soc.},
   volume={139},
   date={2011},
   number={8},
   pages={2671--2680},
}

\bib{bruns-herzog}{book}{
   author={Bruns, Winfried},
   author={Herzog, J{\"u}rgen},
   title={Cohen-Macaulay rings},
   series={Cambridge Studies in Advanced Mathematics},
   volume={39},
   publisher={Cambridge University Press, Cambridge},
   date={1993},
   pages={xii+403},
}

\bib{bucur-kedlaya}{article}{
   author={Bucur, Alina},
   author={Kedlaya, Kiran S.},
   title={The probability that a complete intersection is smooth},
   language={English, with English and French summaries},
   journal={J. Th\'eor. Nombres Bordeaux},
   volume={24},
   date={2012},
   number={3},
   pages={541--556},
}

\bib{cmbpt}{article}{
	author = {Chinburg, Ted},
	author = {Moret-Bailly, Laurent},
	author = {Pappas, George},
	author = {Taylor, Martin J.~},
	title = {Finite morphisms to projective space and capacity theory},
	date = {2012},
	note = {arXiv:1201.0678},
}

\bib{debarre-laface}{article}{
   author={Debarre, Alina},
   author={Laface, Antonio},
   author={Roulleau, Xavier},
   title={Lines on cubic hypersurfaces over finite fields},
   language={English, with English and French summaries},
   journal={eprint arXiv:1510.05803},
   date={2015},
}

\bib{eisenbud}{book}{
   author={Eisenbud, David},
   title={Commutative algebra},
   series={Graduate Texts in Mathematics},
   volume={150},
   note={With a view toward algebraic geometry},
   publisher={Springer-Verlag, New York},
   date={1995},
   pages={xvi+785},
}

\bib{ekedahl}{article}{
   author={Ekedahl, Torsten},
   title={An infinite version of the Chinese remainder theorem},
   journal={Comment. Math. Univ. St. Paul.},
   volume={40},
   date={1991},
   number={1},
   pages={53--59},
}

\bib{ellenberg-erman}{article}{
   author={Ellenberg, Jordan S.},
   author={Erman, Daniel},
   title={Furstenberg sets and Furstenberg schemes over finite fields},
   journal={Algebra Number Theory},
   volume={10},
   date={2016},
   number={7},
   pages={1415--1436},
}

\bib{eot}{article}{
   author={Ellenberg, Jordan S.},
   author={Oberlin, Richard},
   author={Tao, Terence},
   title={The Kakeya set and maximal conjectures for algebraic varieties
   over finite fields},
   journal={Mathematika},
   volume={56},
   date={2010},
   number={1},
   pages={1--25},
}

%

\bib{fulton}{book}{
   author={Fulton, William},
   title={Intersection theory},
   series={Ergebnisse der Mathematik und ihrer Grenzgebiete. 3. Folge. A
   Series of Modern Surveys in Mathematics [Results in Mathematics and
   Related Areas. 3rd Series. A Series of Modern Surveys in Mathematics]},
   volume={2},
   edition={2},
   publisher={Springer-Verlag, Berlin},
   date={1998},
   pages={xiv+470},
}

\bib{gkz}{book}{
   author={Gelfand, I. M.},
   author={Kapranov, M. M.},
   author={Zelevinsky, A. V.},
   title={Discriminants, resultants and multidimensional determinants},
   series={Modern Birkh\"auser Classics},
   note={Reprint of the 1994 edition},
   publisher={Birkh\"auser Boston, Inc., Boston, MA},
   date={2008},
   pages={x+523},
}

\bib{gabber-liu-lorenzini}{article}{
   author={Gabber, Ofer},
   author={Liu, Qing},
   author={Lorenzini, Dino},
   title={Hypersurfaces in projective schemes and a moving lemma},
   journal={Duke Math. J.},
   volume={164},
   date={2015},
   number={7},
   pages={1187--1270},
}

\bib{ega43}{article}{
   author={Grothendieck, A.},
   author={Dieudonn\'e, J.},
   title={\'El\'ements de g\'eom\'etrie alg\'ebrique. IV. \'Etude locale des
   sch\'emas et des morphismes de sch\'emas. III},
   journal={Inst. Hautes \'Etudes Sci. Publ. Math.},
   number={28},
   date={1966},
   pages={255},
}

\bib{ega44}{article}{
   author={Grothendieck, A.},
      author={Dieudonn\'e, J.},
   title={\'El\'ements de g\'eom\'etrie alg\'ebrique. IV. \'Etude locale des
   sch\'emas et des morphismes de sch\'emas IV},
   language={French},
   journal={Inst. Hautes \'Etudes Sci. Publ. Math.},
   number={32},
   date={1967},
   pages={361},
}

\bib{hartshorne}{book}{
   author={Hartshorne, Robin},
   title={Algebraic Geometry},
   series={Graduate Texts in Mathematics},
   volume={52},
   note={With a view toward algebraic geometry},
   publisher={Springer-Verlag, New York},
   date={1977},
   pages={xvi+496},
}

\bib{kedlaya-more-etale}{article}{
   author={Kedlaya, Kiran S.},
   title={More \'etale covers of affine spaces in positive characteristic},
   journal={J. Algebraic Geom.},
   volume={14},
   date={2005},
   number={1},
   pages={187--192},
}

\bib{lang-weil}{article}{
   author={Lang, Serge},
   author={Weil, Andr{\'e}},
   title={Number of points of varieties in finite fields},
   journal={Amer. J. Math.},
   volume={76},
   date={1954},
   pages={819--827},
}

\bib{moh}{article}{
   author={Moh, T. T.},
   title={On a normalization lemma for integers and an application of four
   colors theorem},
   journal={Houston J. Math.},
   volume={5},
   date={1979},
   number={1},
   pages={119--123},
}

\bib{macaulay-determinantal}{book}{
   author={Macaulay, F. S.},
   title={The algebraic theory of modular systems},
   series={Cambridge Mathematical Library},
   note={Revised reprint of the 1916 original;
   With an introduction by Paul Roberts},
   publisher={Cambridge University Press, Cambridge},
   date={1994},
   pages={xxxii+112},
}

\bib{M2}{misc}{
    label={M2},
    author={Grayson, Daniel~R.},
    author={Stillman, Michael~E.},
    title = {Macaulay 2, a software system for research
	    in algebraic geometry},
    note = {Available at \url{http://www.math.uiuc.edu/Macaulay2/}},
}

\bib{nagata}{book}{
   author={Nagata, Masayoshi},
   title={Local rings},
   series={Interscience Tracts in Pure and Applied Mathematics, No. 13},
   publisher={Interscience Publishers a division of John Wiley \& Sons\, New
   York-London},
   date={1962},
}

\bib{neukirch}{book}{
   author={Neukirch, J{\"u}rgen},
   title={Algebraic number theory},
   series={Grundlehren der Mathematischen Wissenschaften [Fundamental
   Principles of Mathematical Sciences]},
   volume={322},
   note={Translated from the 1992 German original and with a note by Norbert
   Schappacher;
   With a foreword by G. Harder},
   publisher={Springer-Verlag, Berlin},
   date={1999},
   pages={xviii+571},
}
%

\bib{poonen}{article}{
   author={Poonen, Bjorn},
   title={Bertini theorems over finite fields},
   journal={Ann. of Math. (2)},
   volume={160},
   date={2004},
   number={3},
   pages={1099--1127},
}

\bib{poonen-squarefree}{article}{
   author={Poonen, Bjorn},
   title={Squarefree values of multivariable polynomials},
   journal={Duke Math. J.},
   volume={118},
   date={2003},
   number={2},
   pages={353--373},
}

\bib{serre}{article}{
   author={Serre, Jean-Pierre},
   title={Zeta and $L$ functions},
   conference={
      title={Arithmetical Algebraic Geometry},
      address={Proc. Conf. Purdue Univ.},
      date={1963},
   },
   book={
      publisher={Harper \& Row, New York},
   },
   date={1965},
   pages={82--92},
}


\bib{zariski-samuel}{book}{
   author={Zariski, Oscar},
   author={Samuel, Pierre},
   title={Commutative algebra. Vol. II},
   note={Reprint of the 1960 edition;
   Graduate Texts in Mathematics, Vol. 29},
   publisher={Springer-Verlag, New York-Heidelberg},
   date={1975},
   pages={x+414},
}
\end{biblist}
\end{bibdiv}

\end{document}